\documentclass[12pt]{amsart}

\usepackage{amsmath,amssymb,amsthm,mathtools}
\usepackage[a4paper,margin=1.05in]{geometry}
\usepackage{enumitem}
\usepackage{hyperref}
\usepackage[nameinlink,capitalize]{cleveref}
\usepackage{microtype}

\hypersetup{colorlinks=true,linkcolor=blue,citecolor=blue,urlcolor=blue}
\newtheorem{theorem}{Theorem}[section]
\newtheorem{proposition}[theorem]{Proposition}
\newtheorem{lemma}[theorem]{Lemma}
\newtheorem{corollary}[theorem]{Corollary}

\newtheorem{remark}[theorem]{Remark}
\theoremstyle{definition}

\crefname{theorem}{Theorem}{Theorems}
\crefname{lemma}{Lemma}{Lemmas}
\crefname{corollary}{Corollary}{Corollaries}
\crefname{proposition}{Proposition}{Propositions}
\crefname{remark}{Remark}{Remarks}
\crefname{claim}{Claim}{Claims}
\Crefname{theorem}{Theorem}{Theorems}
\Crefname{lemma}{Lemma}{Lemmas}
\Crefname{corollary}{Corollary}{Corollaries}
\Crefname{proposition}{Proposition}{Propositions}
\Crefname{remark}{Remark}{Remarks}
\Crefname{claim}{Claim}{Claims}

\newcommand{\C}{\mathbb C}
\newcommand{\D}{\mathbb D}
\newcommand{\Dstar}{\mathbb D^{*}}
\newcommand{\Q}{\mathbb Q}
\newcommand{\R}{\mathbb R}
\newcommand{\Z}{\mathbb Z}
\newcommand{\eps}{\varepsilon}
\newcommand{\dA}{\,dA}
\newcommand{\dV}{\,dV}

\title[$L^p$-boundedness of Forelli--Rudin type operators]
{$L^p$-Boundedness of Forelli--Rudin Type Operators on Rational Hartogs Triangles}

\author{Qian Fu}
\address{Teaching Experiment Platform, Beijing Normal University, Zhuhai 519087, P. R. China}
\email{qianfu@bnu.edu.cn}

\subjclass[2020]{ 47G10;  32A36; 47B38}
\keywords{Forelli--Rudin type operators; Bergman kernel; Hartogs triangle; Schur test}

\begin{document}

\begin{abstract}
Let
\[
   H_{m/n}=\{(z_1,z_2)\in\C^2:|z_1|^m<|z_2|^n<1\},
   \qquad \gcd(m,n)=1,
\]
be a rational Hartogs triangle. We characterize the $L^p$-boundedness of
Forelli--Rudin type operators associated with its Bergman kernel. For the
operators with kernel $|B_{m/n}(z,w)|^{c/2}$, the characterization holds for
all $a,b\in\R$ and $c>0$; for the operators with kernel $B_{m/n}(z,w)^N$, it
holds for every $N\in\Z_+$. The conditions are necessary and sufficient and
recover the sharp $L^p$-ranges of the Bergman projection and the Berezin
transform.
\end{abstract}

\maketitle

\section{Introduction}

For $\gamma>0$, the generalized Hartogs triangle is
\[
   H_\gamma=\{(z_1,z_2)\in\C^2:|z_1|^\gamma<|z_2|<1\}.
\]
If $\gamma=m/n\in\Q_+$ is written in lowest terms, then
\[
   H_{m/n}=\{(z_1,z_2)\in\C^2:|z_1|^m<|z_2|^n<1\},
   \qquad m,n\in\Z_+.
\]
Throughout, $\Z_+$ denotes the set of positive integers. These domains are basic models for the study of the Bergman projection on
pseudoconvex domains with non-Lipschitz boundary. The interaction between the
origin and the boundary component $|z_1|^m=|z_2|^n$ produces phenomena that do
not occur on smoothly bounded strongly pseudoconvex domains.

The $L^p$-regularity of the Bergman projection on Hartogs triangles has been
studied extensively. Chakrabarti and Zeytuncu obtained the sharp range
$4/3<p<4$ on the classical triangle. Edholm computed Bergman kernels for
several generalized triangles, and Edholm and McNeal treated fat triangles
and developed the arithmetic subspace decomposition for rational exponents
\cite{ChakrabartiZeytuncu2016,Edholm2016,EdholmMcNeal2016,EdholmMcNeal2017}.
In particular, for relatively prime $m,n\in\Z_+$, the Bergman projection on
$H_{m/n}$ is bounded on $L^p$ precisely when
\[
   \frac{2m+2n}{m+n+1}<p<\frac{2m+2n}{m+n-1},
\]
whereas for irrational $\gamma$ it is bounded only on $L^2$
\cite[Theorems~1.1 and~1.2]{EdholmMcNeal2017}. Endpoint, weighted, and
Bergman--Toeplitz estimates on Hartogs triangles were subsequently obtained
in \cite{HuoWickBLMS2020,HuoWickJFA2020,KhanhLiuThuc2019}.

Forelli--Rudin estimates originated in the analysis of integral operators on
the unit ball \cite{ForelliRudin1974,Rudin1980}. They underlie the boundedness
of the Bergman projection, its positive counterpart, the Berezin transform,
and related weighted operators. The corresponding operator theory on the
unit ball was developed further in
\cite{KuresZhu2006,Liu2015,Zhao2015,ZhaoZhou2022,Zhu2007}. On the classical
Hartogs triangle, weighted $L^p$ estimates and a complete $L^p$--$L^q$
analysis were obtained in \cite{QinWangGuo2023,QinWangXie2025}; see also
\cite{GogusSahutoglu2023} for a general criterion for $L^p$-regularity of the
Berezin transform. Related Forelli--Rudin type operators on two distinct
higher-dimensional generalized Hartogs triangles were studied in
\cite{ZouCAOT2026}.

The present paper gives an exact $L^p$ characterization on every rational
triangle $H_{m/n}$ for the operators in \eqref{eq:def-positive} and
\eqref{eq:def-integer} below. For $m>1$, the Bergman kernel is a finite sum of
$m$ arithmetic subkernels rather than a single product kernel. Consequently,
an estimate for one summand does not control the full kernel, and lower
bounds must account for possible cancellation among the summands. We derive
a global majorant and a two-sided diagonal estimate directly from the complete
Edholm--McNeal decomposition. The unique congruence index $j_*$ satisfying
\[
   n(j_*+1)\equiv1\pmod m
\]
determines the exponent
\[
   \kappa=\frac{m+n-1}{m}
\]
that governs the singularity at $(0,0)$. These estimates yield a single Schur
argument for all real weights $a,b$, including negative values.

Lower estimates require a separate localization because the full kernel may
vanish. Edholm and Mathew proved the existence of zeros for broad
rational subfamilies, including infinitely many rational exponents converging
to $1$ \cite[Theorem~1.7 and Corollary~1.8]{EdholmMathew2026}. We localize the
coordinate products in the convex cone $\Gamma_\theta$ defined in
\eqref{eq:cone-definition}. On this region, all nonzero terms in the common
numerator have controlled arguments, so their sum cannot cancel. The same
argument control gives the lower bound for $\operatorname{Re}(B_{m/n}^N)$
needed for the operators with integer powers.

Section~2 establishes the kernel estimates and auxiliary integral lemmas.
Section~3 proves sufficiency by Schur's test. Section~4 proves the necessary
conditions, and Section~5 treats integer powers.

For $a,b\in\R$ and $c>0$, define
\begin{equation}\label{eq:def-positive}
   S^+_{a,b,c}f(z)
   =\int_{H_{m/n}}
   \frac{|B_{m/n}(z,w)|^{c/2}}
        {B_{m/n}(z,z)^{a/2}B_{m/n}(w,w)^{b/2}}
   f(w)\dV(w).
\end{equation}
For $N\in\Z_+$, define
\begin{equation}\label{eq:def-integer}
   S_{a,b,2N}f(z)
   =\int_{H_{m/n}}
   \frac{B_{m/n}(z,w)^N}
        {B_{m/n}(z,z)^{a/2}B_{m/n}(w,w)^{b/2}}
   f(w)\dV(w).
\end{equation}

For $1<p<\infty$, let $\mathcal C(a,b,c;p)$ denote the following three
conditions:
\begin{equation*}\tag{C1}\label{cond:boundary}
   -ap<1<(b+1)p,
\end{equation*}
\begin{equation*}\tag{C2}\label{cond:vertex}
   (c-2a)(m+n-1)p<4(m+n)
   <\big((2b-c)(m+n-1)+4(m+n)\big)p.
\end{equation*}
\begin{equation*}\tag{C3}\label{cond:pinch}
   c\le a+b+2.
\end{equation*}

\begin{theorem}\label{thm:positive-main}
Let $m,n\in\Z_+$ be relatively prime, $1<p<\infty$, $a,b\in\R$, and $c>0$.
Then $S^+_{a,b,c}$ is bounded on $L^p(H_{m/n})$ if and only if
$\mathcal C(a,b,c;p)$ holds.
\end{theorem}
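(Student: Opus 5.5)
The plan is to reduce everything to explicit kernel estimates on $H_{m/n}$ and then run a Schur test for sufficiency and test-function arguments for necessity. First, from the Edholm--McNeal decomposition of $B_{m/n}$ into $m$ arithmetic subkernels, I would derive a global majorant. In the coordinates $s=z_1\bar w_1$, $t=z_2\bar w_2$ it should read
\[
|B_{m/n}(z,w)|\lesssim \frac{|t|^{-\kappa}}{|1-t|^2\,\bigl|1-s^m/t^n\bigr|^2}\cdot(\text{normalizing powers}),
\]
with $\kappa=(m+n-1)/m$ coming from the index $j_*$. I would also derive the two-sided diagonal estimate
\[
B_{m/n}(z,z)\approx |z_2|^{-2\kappa}(1-|z_2|^2)^{-2}\,\delta(z)^{-2},
\]
where $\delta(z)$ is the distance-type quantity $1-|z_1|^m/|z_2|^n$. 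Next I would pass to the product-type coordinates $(\zeta,z_2)$ with $\zeta^m\sim z_1^m/z_2^n$. The volume element then becomes a weight times area measures on a disc times a punctured disc. This makes the integrals split into one-dimensional Forelli--Rudin integrals of the form $\int_{\D}(1-|w|^2)^{\alpha}|1-z\bar w|^{-2-\alpha-\beta}\dA(w)$ and radial power integrals near the origin.

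For sufficiency I would apply Schur's test with a test function
\[
h(w)=\delta(w)^{-r}(1-|w_2|^2)^{-r'}|w_2|^{-\sigma}.
\]
I need $\int K(z,w)h(w)^{p'}\dV(w)\lesssim h(z)^{p'}$, and the analogous estimate with $p$ and the roles of $z,w$ exchanged. Each factor reduces, via the majorant, to a standard one-variable estimate.

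\begin{itemize}
\item The boundary factors $\delta$ and $1-|w_2|^2$ give intervals for $r,r'$. These intervals are nonempty exactly when (C1) holds.
\item The factor $|w_2|^{-\sigma}$ at the vertex gives an integrability condition at $w_2=0$ after integrating the power $|w_2|^{(b-c/2)\kappa\cdot(\ldots)-\sigma p'}$ against the Jacobian. Its solvability, together with the dual inequality, yields exactly (C2).
\item The Forelli--Rudin estimate requires the exponent of $|1-\cdot|$ not to exceed the critical one by more than the weight gained. This is precisely $c\le a+b+2$, which is (C3).
\end{itemize}

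For necessity, the difficulty is the possible cancellation among the $m$ subkernels. The plan is therefore to restrict $z$ and $w$ so that $(s,t)$ lies in the cone $\Gamma_\theta$. There all terms of the common numerator have arguments within $\theta<\pi/2$ of one another, which gives $|B_{m/n}(z,w)|\gtrsim$ the majorant.

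\begin{itemize}
\item For (C1) and (C2), I would test on $f=\chi_E\,|w_2|^{-\lambda}\delta(w)^{-\mu}$ over cone-localized sets $E$ near the smooth boundary piece, the outer boundary, and the origin. Boundedness then forces $S^+f\in L^p$ near each of these pieces, or boundedness of the adjoint, which has kernel exponents $(b,a)$ relative to $L^{p'}$. The critical $\lambda,\mu$ reproduce the strict inequalities.
\item For (C3), I would take $f$ the indicator of a small Bergman-type polydisc around a point $w_0$ tending to a boundary point. A lower estimate of $S^+f$ on the comparable polydisc gives a norm ratio of order $\delta(w_0)^{(a+b+2-c)\cdot(\text{const})}$, which must stay bounded.
\end{itemize}

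The main obstacle is the lower bound on the cone, especially near the vertex, since that is what pins down the exponent $\kappa$. There one must check that the $j_*$ subkernel dominates while the remaining terms still share controlled argument. I would first try to verify this by writing each subkernel as a product of powers of $s,t$ times positive functions of $s^m/t^n$, and then bounding the arguments directly.
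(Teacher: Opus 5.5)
Your overall architecture is the same as the paper's: cone localization to rule out cancellation, the fiberwise substitution $u=\Phi(w)$, a Schur test with a product weight, test sets near the Levi-flat piece and the vertex, and duality. However, the sufficiency half rests on an estimate that is false.

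\textbf{The diagonal estimate fails.} The two-sided bound $B_{m/n}(z,z)\approx|z_2|^{-2\kappa}(1-|z_2|^2)^{-2}\delta(z)^{-2}$ holds globally only when $j_*=0$, i.e.\ when $n\equiv1\pmod m$. The vertex exponent $\kappa$ comes only from the monomials $z_1^\alpha z_2^\beta$ with $\alpha\equiv j_*\pmod m$. Their diagonal contribution carries the factor $|\Phi(z)|^{2j_*/m}$, so where $|\Phi(z)|$ is small the kernel is less singular at the vertex.

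For instance, for $(m,n)=(3,2)$ one has $\kappa=4/3$. On $\{z_1=0\}$ only the monomials $z_2^\beta$ with $\beta\ge-1$ contribute, so there $B_{m/n}(z,z)\approx|z_2|^{-2}(1-|z_2|^2)^{-2}$, not $|z_2|^{-8/3}(1-|z_2|^2)^{-2}$. In general you only have $B_{m/n}(z,z)\approx P(z)R(z)^{-2}\rho(z)^{-2}$, with $P=\sum_jA_j^2$ and $1\le P(z)\lesssim|z_2|^{-2\kappa}$.

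\textbf{Consequence for the Schur test.} Your kernel bound with vertex factors $|z_2|^{(a-c/2)\kappa}|w_2|^{(b-c/2)\kappa}$ is unjustified whenever $a>0$ or $b>0$, because then you need a lower bound on the diagonal. Suppose you repair it with the true lower bound $B_{m/n}(z,z)\gtrsim R(z)^{-2}\rho(z)^{-2}$ while keeping your numerator majorant $|z_2\overline{w_2}|^{-\kappa}$. This costs a factor $|z_2|^{-c\kappa/2}$, which is fatal. For the Berezin transform $(a,b,c)=(2,0,4)$, the resulting positive majorant kernel defines an operator bounded for no $p$. Applying it and its transpose to $\chi_E$, with $E$ compact, forces both $p<(m+n)/(m+n-1)$ and $p>m+n$. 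So no Schur weight can work.

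\textbf{The missing idea.} Keep the finite-sum structure in the numerator as well:
\[
|B_{m/n}(z,w)|\lesssim \frac{Q(z,w)}{|1-z_2\overline{w_2}|^{2}\,|1-\Phi(z)\overline{\Phi(w)}|^{2}},
\qquad Q(z,w)=\sum_jA_j(z)A_j(w).
\]
Then Cauchy--Schwarz, $Q(z,w)\le P(z)^{1/2}P(w)^{1/2}$, merges the off-diagonal and diagonal factors into
\[
P(z)^{(c-2a)/4}P(w)^{(c-2b)/4}\lesssim|z_2|^{-\frac\kappa2(c-2a)_+}\,|w_2|^{-\frac\kappa2(c-2b)_+}.
\]
These are exactly the vertex losses for which the Schur test closes under (C2).

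\textbf{Related issues in your necessity sketch.} The same conflation appears there. On the cone you get $|B_{m/n}(z,w)|\gtrsim|z_2\overline{w_2}|^{-\kappa}(\cdots)$ only after also confining $|\Phi(z)|$ and $|\Phi(w)|$ to a fixed range away from $0$. In that range no term has to dominate: the cone only has to prevent cancellation, and the $j_*$ term alone then gives the bound.

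Finally, for (C3) make sure your box is thin only in the normal variable $\Phi$, with $z_2$ in a fixed set, so that its volume is $\approx\eps^2$. A box shrinking in both variables only yields $c\le a+b+4$.
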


\begin{theorem}\label{thm:integer-main}
Let $m,n\in\Z_+$ be relatively prime, $1<p<\infty$, $a,b\in\R$, and $N\in\Z_+$.
The following are equivalent:
\begin{enumerate}[label=\textup{(\roman*)}]
   \item $S^+_{a,b,2N}$ is bounded on $L^p(H_{m/n})$;
   \item $S_{a,b,2N}$ is bounded on $L^p(H_{m/n})$;
   \item $\mathcal C(a,b,2N;p)$ holds.
\end{enumerate}
\end{theorem}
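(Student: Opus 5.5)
The statement to prove is \cref{thm:integer-main}. I would prove the cycle (iii)$\Rightarrow$(i)$\Rightarrow$(ii)$\Rightarrow$(iii).

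\emph{(iii)$\Rightarrow$(i) and (i)$\Rightarrow$(ii).} The first implication is the special case $c=2N$ of \cref{thm:positive-main}, since $|B_{m/n}(z,w)|^{2N/2}=|B_{m/n}(z,w)|^N$ makes $S^+_{a,b,2N}$ the operator of \eqref{eq:def-positive} with $c=2N$. The second is the pointwise domination $|S_{a,b,2N}f|\le S^+_{a,b,2N}|f|$, which holds because $B_{m/n}(z,z)$ and $B_{m/n}(w,w)$ are positive.

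\emph{(ii)$\Rightarrow$(iii).} This is the substantive implication. Cancellation in $B_{m/n}^N$ means the necessity argument of \cref{thm:positive-main} cannot be reused verbatim. The plan is to rerun each test-function construction from Section~4 on the region where the coordinate products lie in the cone $\Gamma_\theta$ of \eqref{eq:cone-definition}. There the terms of the common numerator have controlled arguments, so I expect a lower bound of the form
\[
\operatorname{Re}\bigl(B_{m/n}(z,w)^N\bigr)\ge c_0\,|B_{m/n}(z,w)|^N .
\]
To get it, I would choose $\theta$ small enough in terms of $N$ that $N\arg B_{m/n}$ stays in $(-\pi/2+\delta,\pi/2-\delta)$.

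With this lower bound, each necessary condition is tested by taking $f\ge0$ supported in a localized region and pairing $S_{a,b,2N}f$ with a nonnegative $g$ supported in another localized region. This gives
\[
\operatorname{Re}\langle S_{a,b,2N}f,g\rangle\gtrsim\langle S^+_{a,b,2N}f,g\rangle .
\]
The lower estimates used for $S^+$ then transfer. Concretely, I would reproduce the three families of tests, always with the localization built in:
\begin{enumerate}
\item Near the smooth boundary pieces, tests with $f$ and $g$ concentrated in a small Carleson-type box. These yield \eqref{cond:boundary}: the integrability of $B(z,z)^{-ap/2}$ near the boundary, and the $L^{p'}$ integrability of $B(w,w)^{-b/2}$.
\item Near the vertex $(0,0)$, radial power test functions $f=|w_2|^{s}\chi$ in the sector $\Gamma_\theta$. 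Using the diagonal asymptotics with exponent $\kappa=(m+n-1)/m$ from Section~2, these yield \eqref{cond:vertex}.
\item A pinching test along the diagonal, with $f$ the indicator of a small Bergman-type ball and $g$ the same indicator. The two-sided diagonal estimate gives $|B(z,w)|\approx B(z,z)$ there, and letting the ball approach the boundary forces \eqref{cond:pinch}.
\end{enumerate}

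\emph{Main obstacle.} I expect the hardest step to be the argument control near the vertex. The cone must be compatible with the sets where the vertex test functions live; in particular they must have positive measure with the correct homogeneity in $|w_2|$ after restricting to $\Gamma_\theta$. The plan is to use the fact that $\Gamma_\theta$ is defined through arguments of coordinate products only. It is therefore invariant under the radial scalings $(z_1,z_2)\mapsto(t^{n}z_1,t^{m}z_2)$, and the vertex integrals restricted to the cone differ from the full ones only by a constant factor depending on $\theta$. The same invariance handles the boundary tests, which can be localized along the cone as well.
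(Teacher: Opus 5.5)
\textbf{Verdict: genuine gap in (ii)$\Rightarrow$(iii).} Your (iii)$\Rightarrow$(i)$\Rightarrow$(ii) matches the paper. The key step of (ii)$\Rightarrow$(iii) fails as stated: localizing only $z_1\overline{w_1}$ and $z_2\overline{w_2}$ in $\Gamma_\theta$ does \emph{not} give $\operatorname{Re}(B_{m/n}^N)\ge c_0|B_{m/n}|^N$, however small $\theta$ is.

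\textbf{Why the phase bound fails.} The cone controls the phase of the numerator $\mathcal N(z,w)$. But $B_{m/n}$ also carries the phases of $(1-z_2\overline{w_2})^{-2}$ and $\bigl(1-\Phi(z)\overline{\Phi(w)}\bigr)^{-2}$, and these are uncontrolled when the moduli approach $1$. For example, $\lambda=(1-\eta^2)e^{i\eta}\in\Gamma_\theta$ for $\eta<\theta$, yet $1-\lambda=\tfrac32\eta^2-i\eta+O(\eta^3)$. So $-2\arg(1-\lambda)$ increases to $\pi$ as $\eta\to0$. With the remaining factors nearly real, $N\arg B_{m/n}$ passes through $\pi$, and no bound of your form can hold on such a region.

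This is exactly the configuration in your boundary tests (1). Two points of a Carleson box at depth $\eta^2$ and tangential separation $\eta$ lie in the same box. Hence on a substantial part of $\operatorname{supp} g\times\operatorname{supp} f$ the claimed $\operatorname{Re}\langle S_{a,b,2N}f,g\rangle\gtrsim\langle S^+_{a,b,2N}f,g\rangle$ is unjustified, and the real parts can genuinely cancel.

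\textbf{What the paper does instead.} Its phase lemma (\cref{lem:fixed-phase}) requires all four quantities to lie in $\Gamma_{\theta_N}$: $z_1\overline{w_1}$, $z_2\overline{w_2}$, $1-z_2\overline{w_2}$, and $1-(z_1\overline{w_1})^m/(z_2\overline{w_2})^n$. The tests are built to guarantee the last two.
\begin{itemize}
\item For \eqref{cond:boundary} and \eqref{cond:vertex}, the test function is a \emph{fixed} $\chi_E$ with $E\Subset H_{m/n}$, and only $z$ approaches the boundary or the vertex. Then $|z_2\overline{w_2}|\le R_2<1$ and $|\Phi(z)\overline{\Phi(w)}|\le u_+<1$. \cref{lem:one-minus-cone} puts $1-\lambda$ in the cone once $|\arg\lambda|<(1-R)\tan\theta_N$. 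This gives $|S_{a,b,2N}\chi_E(z)|\gtrsim B_{m/n}(z,z)^{-a/2}|z_2|^{-N\kappa}$ on the test regions, and local $L^p$ integrability yields the conditions on $a$. The conditions on $b$ follow from the adjoint $S_{b,a,2N}$ on $L^q$.
\item For \eqref{cond:pinch}, the boxes of \cref{lem:pinching-boxes} have angular width $c_\psi\eps$ comparable to their depth $\eps$. This forces $1-u\overline v\in\Gamma_\psi$ with $\psi=\theta_N$.
\end{itemize}

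\textbf{On your other tests.} Your small Bergman-type balls in (3) can work, but only if their radius is chosen small in terms of $N$, for the reason above. Also, $|B(z,w)|\approx B(z,z)$ on those balls does not follow from the two-sided diagonal estimate. An off-diagonal lower bound needs the numerator no-cancellation, as in \cref{lem:conical-lower}(ii).

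Finally, the obstacle you singled out (homogeneity of the vertex sets) is not the real difficulty. Near the vertex the phase is harmless provided the ratio variables $\Phi(z),\Phi(w)$ stay in a compact subset of $\D$.
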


\begin{corollary}\cite{EdholmMcNeal2017}\label{cor:bergman-projection}
The Bergman projection on $H_{m/n}$ is bounded on $L^p(H_{m/n})$ if and only if
\[
   \frac{2m+2n}{m+n+1}<p<\frac{2m+2n}{m+n-1}.
\]
\end{corollary}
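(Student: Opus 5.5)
The plan is to read the corollary off \cref{thm:integer-main} with $N=1$ and the weights $a=b=0$. In that case $S_{0,0,2}f(z)=\int_{H_{m/n}}B_{m/n}(z,w)f(w)\dV(w)$, which is exactly the Bergman projection $P$. For $f\in L^2\cap L^p$ the integral representation of $P$ is literally this operator. Moreover, \cref{thm:integer-main} asserts $L^p$-boundedness of the integral operator on all of $L^p$, with absolute convergence coming from the equivalence with $S^+_{0,0,2}$. So boundedness of $P$ on $L^p$, meaning extension from $L^2\cap L^p$, coincides with item (ii) of \cref{thm:integer-main}. This identification needs a short remark: if $P$ extends boundedly, then the operator $S_{0,0,2}$ restricted to the dense set $L^2\cap L^p$ is bounded, and this is what the necessity argument in Section~5 actually tests, on explicit test functions lying in $L^2\cap L^p$. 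I would state that the necessity proof only uses such test functions, so the equivalence (ii)$\Leftrightarrow$(iii) applies to $P$.

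It then remains to evaluate $\mathcal C(0,0,2;p)$.

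\textbf{(C1).} It reads $0<1<p$, which holds automatically.

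\textbf{(C3).} It reads $2\le 2$, which also holds.

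\textbf{(C2).} With $c=2$ and $a=b=0$ it becomes
\[
   2(m+n-1)p<4(m+n)<\big(-2(m+n-1)+4(m+n)\big)p=2(m+n+1)p .
\]
The left inequality gives $p<\frac{2m+2n}{m+n-1}$. The right inequality gives $p>\frac{2m+2n}{m+n+1}$.

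Since $\frac{2m+2n}{m+n+1}>1$ and the upper bound is finite when $m+n>1$, the range automatically lies in $(1,\infty)$, so the hypothesis $1<p<\infty$ of the theorem is harmless. The case $m=n=1$, where the upper bound is $\infty$, does not occur under the relative-primality convention beyond the classical triangle. For $m=n=1$ the interval is $4/3<p<4$, and the formula reads $4/(3)<p<4$, consistent with \cite{ChakrabartiZeytuncu2016}.

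The only step needing care is the identification of $P$ with $S_{0,0,2}$ on $L^p$ for $p\neq 2$, that is, the density argument above. The rest is arithmetic.
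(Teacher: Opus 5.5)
Your proposal is correct and is the same as the paper's proof: specialize Theorem~\ref{thm:integer-main} to $(a,b,N)=(0,0,1)$, observe that \eqref{cond:boundary} and \eqref{cond:pinch} hold automatically, and solve the two inequalities in \eqref{cond:vertex}. Your density remark is extra care the paper leaves implicit; it goes through because the test functions $\chi_E,\chi_{E_\eps}$ lie in $L^2\cap L^p$ and $P$ is self-adjoint, which covers the adjoint step on $L^q$. One harmless slip: $\frac{2m+2n}{m+n-1}$ is finite for all $m,n\in\Z_+$ (it equals $4$ when $m=n=1$), so your sentence about an infinite upper bound when $m=n=1$ is wrong and should be deleted.
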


\begin{corollary}\label{cor:berezin}
Let
\[
   \mathcal B_{m/n}f(z)=\int_{H_{m/n}}
   \frac{|B_{m/n}(z,w)|^2}{B_{m/n}(z,z)}f(w)\dV(w).
\]
Then $\mathcal B_{m/n}$ is bounded on $L^p(H_{m/n})$ if and only if
\[
   p>m+n.
\]
\end{corollary}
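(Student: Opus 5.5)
This is a direct specialization of \cref{thm:positive-main}: we identify the Berezin transform with one of the operators $S^+_{a,b,c}$ and solve the conditions $\mathcal C(a,b,c;p)$ for $p$. Since $B_{m/n}(z,z)>0$ on $H_{m/n}$, the Berezin kernel is
\[
   \frac{|B_{m/n}(z,w)|^2}{B_{m/n}(z,z)}
   =\frac{|B_{m/n}(z,w)|^{c/2}}{B_{m/n}(z,z)^{a/2}B_{m/n}(w,w)^{b/2}}
   \qquad\text{with } c=4,\ a=2,\ b=0.
\]
Hence $\mathcal B_{m/n}=S^+_{2,0,4}$. Because $c=4=2N$ with $N=2$, one could equally invoke \cref{thm:integer-main}; either way, $\mathcal B_{m/n}$ is bounded on $L^p$ exactly when $\mathcal C(2,0,4;p)$ holds. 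It remains to check the three conditions.

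\emph{Condition \eqref{cond:boundary}.} It reads $-2p<1<p$. The left inequality is automatic for $p>0$, and the right one is automatic since $p>1$. So this condition imposes nothing.

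\emph{Condition \eqref{cond:pinch}.} It reads $4\le 2+0+2$, which holds with equality. So this condition also imposes nothing.

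\emph{Condition \eqref{cond:vertex}.} Here $c-2a=0$ and $2b-c=-4$. The left inequality becomes $0<4(m+n)$, which always holds. The right inequality becomes
\[
   4(m+n)<\big(-4(m+n-1)+4(m+n)\big)p=4p,
\]
which is equivalent to $p>m+n$.

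Therefore $\mathcal C(2,0,4;p)$ holds if and only if $p>m+n$. Since $m+n\ge2>1$, this range lies inside $(1,\infty)$, consistent with the standing assumption of \cref{thm:positive-main}, and the corollary follows.

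No step here is a genuine obstacle. All the analytic content, namely the Schur test for sufficiency and the localization in the cone $\Gamma_\theta$ for necessity, is contained in \cref{thm:positive-main}. The only points needing care are the correct matching of exponents (the factor $c/2$ in the kernel forces $c=4$, and $a=2$) and the positivity of $B_{m/n}(z,z)$, which makes the rewriting of the kernel legitimate.
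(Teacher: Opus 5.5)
Your proposal is correct and follows the paper's own proof: you identify $\mathcal B_{m/n}=S^+_{2,0,4}$, observe that \eqref{cond:boundary}, the left half of \eqref{cond:vertex} (since $c-2a=0$), and \eqref{cond:pinch} (which holds with equality) impose no restriction, and reduce the right half of \eqref{cond:vertex} to $4(m+n)<4p$, i.e.\ $p>m+n$. You also note, as the paper does, that $m+n\ge 2$ places this range inside $(1,\infty)$.
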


\begin{remark}
For
$(m,n)=(1,1)$, the criteria reduce to the classical Hartogs-triangle formulas:
Corollary~\ref{cor:bergman-projection} gives $4/3<p<4$, while
Corollary~\ref{cor:berezin} gives $p>2$.  Thus the main theorems agree with
\cite{ChakrabartiZeytuncu2016,QinWangGuo2023}.
\end{remark}

\begin{remark}
The condition $c>0$ in Theorem~\ref{thm:positive-main} is deliberate.  Since
$B_{m/n}$ can have zeros on $H_{m/n}\times H_{m/n}$, negative powers of
$|B_{m/n}|$ would create interior singularities not governed by the boundary
conditions \eqref{cond:boundary}--\eqref{cond:pinch}.  For $S_{a,b,2N}$, the exponent is an integer and only the ordinary power
$B_{m/n}^N$ occurs.
\end{remark}

\section{Kernel estimates and auxiliary lemmas}

If $A$ and $B$ are nonnegative quantities depending on several variables,
$A\lesssim B$ will signify that there exists a constant $C>0$, independent of
the relevant variables, such that $A\le C B$.  We write $A\approx B$ if both
$A\lesssim B$ and $B\lesssim A$ hold. If $x\in\R$, then $\lfloor x\rfloor$
denotes the greatest integer not exceeding $x$.

The symbol $\dV$ denotes Lebesgue measure on $\C^2$, and
$\dA$ denotes planar Lebesgue measure on $\C$.  Their normalizations play
no role in the estimates.

For $\xi\in H_{m/n}$, set
\begin{equation}\label{eq:PhiRrho}
   \Phi(\xi)=\frac{\xi_1^m}{\xi_2^n},\qquad
   R(\xi)=1-|\Phi(\xi)|^2,\qquad
   \rho(\xi)=1-|\xi_2|^2,
\end{equation}
and put
\[
   \kappa=\frac{m+n-1}{m}.
\]
For $0<\theta<\pi/2$, define
\begin{equation}\label{eq:cone-definition}
   \Gamma_\theta
   =\{0\}\cup\{\zeta\in\C\setminus\{0\}:
      |\operatorname{Arg}\zeta|<\theta\},
\end{equation}
where $\operatorname{Arg}\zeta\in(-\pi,\pi]$ is the principal argument.
Thus $\Gamma_\theta$ is the convex cone with vertex at the origin, axis the
positive real line, and aperture $2\theta$.  We shall repeatedly use
\[
   \Gamma_{\theta_1}\Gamma_{\theta_2}
   \subset \Gamma_{\theta_1+\theta_2}
\]
whenever $\theta_1+\theta_2<\pi/2$.

The following decomposition is due to Edholm and McNeal
\cite[Theorem~3.4 and Corollary~3.5]{EdholmMcNeal2017}.
\begin{lemma}\cite{EdholmMcNeal2017}\label{lem:subkernel-formula}
For $0\le j\le m-1$, set
\[
   E_j=\left\lfloor \frac{(j+1)n-1}{m}\right\rfloor.
\]
The Bergman kernel decomposes as
\[
   B_{m/n}(z,w)=\sum_{j=0}^{m-1}K_j(z,w),
\]
where
\[
   K_j(z,w)=\frac{n}{m\pi^2}\,
   \frac{F_j(z,w)G_j(z,w)(z_1\overline{w_1})^j
         (z_2\overline{w_2})^{n-1-E_j}}
        {(1-z_2\overline{w_2})^2
         \big((z_2\overline{w_2})^n-(z_1\overline{w_1})^m\big)^2},
\]
\[
   F_j(z,w)=(j+1)(z_2\overline{w_2})^n
      +(m-j-1)(z_1\overline{w_1})^m,
\]
and
\[
   G_j(z,w)=j+1-\frac mn E_j+
   \left(\frac mn+\frac mnE_j-j-1\right)z_2\overline{w_2}.
\]
\end{lemma}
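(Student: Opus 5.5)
The plan is to compute $B_{m/n}$ directly from an orthonormal basis of monomials, following Edholm--McNeal. Since $H_{m/n}$ is a Reinhardt domain, the Laurent monomials $z_1^{\alpha_1}z_2^{\alpha_2}$ that lie in $L^2(H_{m/n})$ form an orthogonal basis of the Bergman space. Write $s=z_1\overline{w_1}$ and $t=z_2\overline{w_2}$. Then
\[
   B_{m/n}(z,w)=\sum_{\alpha}\frac{s^{\alpha_1}t^{\alpha_2}}{\|z^\alpha\|^2}.
\]
This reduces the lemma to computing the norms, identifying which exponents occur, and summing the series in closed form.

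\textbf{Norms.} Holomorphy at points with $z_1=0$ forces $\alpha_1\ge0$, while $\alpha_2\in\Z$ is allowed. Fubini gives
\[
   \int_{|z_1|<|z_2|^{n/m}}|z_1|^{2\alpha_1}\dA(z_1)=\frac{\pi}{\alpha_1+1}|z_2|^{2n(\alpha_1+1)/m}.
\]
Integrating then in $z_2$ over $\{|z_2|<1\}$ yields
\[
   \|z^\alpha\|^2=\frac{\pi^2}{(\alpha_1+1)\big(\alpha_2+1+\tfrac nm(\alpha_1+1)\big)}.
\]
This norm is finite exactly when $\alpha_2+1+\frac nm(\alpha_1+1)>0$.

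\textbf{Splitting into arithmetic subsums.} I split the sum according to the residue $j$ of $\alpha_1$ modulo $m$, writing $\alpha_1=j+mk$ with $k\ge0$ and $0\le j\le m-1$. Setting $\ell=\alpha_2+nk$, the admissibility condition becomes $\ell+1>-n(j+1)/m$. For integers this is equivalent to $\ell\ge -1-E_j$, with $E_j=\lfloor((j+1)n-1)/m\rfloor$ as in the statement. Put $\ell=q-1-E_j$ with $q\ge0$. Then the $j$-th subsum is
\[
   K_j=\frac{1}{\pi^2}\,s^j t^{-1-E_j}\sum_{k\ge0}\sum_{q\ge0}(j+1+mk)\Big(q-E_j+\tfrac nm(j+1)+nk\Big)\,u^k t^q,
   \qquad u=\frac{s^m}{t^n}.
\]
The coefficient is a polynomial of degree one in each of $k$ and $q$. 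So the double sum is a combination of $\sum u^k$, $\sum k u^k$, $\sum t^q$ and $\sum q t^q$, namely $(1-u)^{-1}$, $u(1-u)^{-2}$, $(1-t)^{-1}$ and $t(1-t)^{-2}$. Expanding the product, the $k$-dependence collects into
\[
   \frac{(j+1)+(m-j-1)u}{(1-u)^2}.
\]
Multiplying numerator and denominator by $t^{2n}$ produces $F_j/(t^n-s^m)^2$. Likewise the $q$-dependence collects into
\[
   \frac{\big(j+1-\tfrac mnE_j\big)+\big(\tfrac mn+\tfrac mnE_j-j-1\big)t}{(1-t)^2}
\]
times the overall factor $n/m$, which is $\frac nm G_j/(1-t)^2$. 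The leftover powers of $t$ combine to $t^{-1-E_j}\cdot t^{2n}/t^{n}$, which is $t^{n-1-E_j}$. Assembling these pieces gives exactly the displayed formula for $K_j$.

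\textbf{Main obstacle.} I expect the main difficulty to be the bookkeeping rather than any conceptual step. Three points need care. First, the integer rounding that turns the strict inequality into $\ell\ge-1-E_j$ uses $\gcd(m,n)=1$, so that $n(j+1)/m$ is an integer only when $j=m-1$. Second, the cross terms $kq$, $k$, $q$ and $1$ must genuinely factor as a product of a function of $u$ and a function of $t$. I would verify this by checking that the coefficient equals $\frac nm\,(j+1+mk)\,(\text{affine in }q)$ plus a term linear in $k$ that cancels appropriately. If the naive factorization fails, I would instead sum in $q$ first for fixed $k$ and then sum the resulting rational function in $k$. Third, absolute convergence, which justifies the rearrangement, follows from $|u|<1$ and $|t|<1$ on $H_{m/n}\times H_{m/n}$.
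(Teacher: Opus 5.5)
The paper does not prove this lemma; it quotes it from Edholm--McNeal. Your route is sound: Laurent monomials as an orthogonal basis, the norm formula, splitting $\alpha_1=j+mk$, reindexing by $\ell=\alpha_2+nk$, and the cutoff $\ell\ge-1-E_j$. Your final assembly of $F_j$, $G_j$ and $t^{n-1-E_j}$ is also right. There is, however, a concrete error in the central display. After the substitution $\ell=\alpha_2+nk$, the second norm factor is
\[
   \alpha_2+1+\tfrac nm(\alpha_1+1)
   =(\alpha_2+nk)+1+\tfrac nm(j+1)
   =\ell+1+\tfrac nm(j+1)
   =q-E_j+\tfrac nm(j+1).
\]
The extra $+nk$ in your coefficient therefore counts $nk$ twice. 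As written, your coefficient contains $mnk^2$, so it is not of degree one in $k$. The surplus piece $nk(j+1+mk)$ contributes a term with a $(1-u)^{-3}$ singularity, and nothing cancels it. So neither the hoped-for ``cancelling linear term'' nor summing in $q$ first would produce the stated $K_j$. Already for $m=n=1$, your coefficient $(k+1)(q+1+k)$ adds $\sum_k k(k+1)u^k=2u(1-u)^{-3}$ (times $(1-t)^{-1}$), which is absent from the classical Hartogs-triangle kernel.

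With the correct coefficient $(j+1+mk)(q+c_j)$, where $c_j=\frac nm(j+1)-E_j\ge\frac1m>0$, the double sum factors exactly, with no cross terms to control:
\[
   \sum_{k\ge0}(j+1+mk)u^k=\frac{(j+1)+(m-j-1)u}{(1-u)^2},
   \qquad
   \sum_{q\ge0}(q+c_j)t^q=\frac{c_j+(1-c_j)t}{(1-t)^2}=\frac nm\,\frac{G_j}{(1-t)^2}.
\]
Your assembly then gives the lemma verbatim, and the ``main obstacle'' you anticipate turns out to be an artifact of the slip. Separately, $\gcd(m,n)=1$ plays no role in the rounding step. For every integer $N$ one has $\lceil N/m\rceil=\lfloor (N-1)/m\rfloor+1$, so $\ell+1>-n(j+1)/m$ holds if and only if $\ell\ge-1-E_j$, whatever the gcd. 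With these corrections your computation is a complete, self-contained derivation of what the paper only cites.
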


\begin{lemma}\label{lem:numerator-cone}
Write the kernel with a common denominator as
\[
   B_{m/n}(z,w)=
   \frac{\mathcal N(z,w)}
        {(1-z_2\overline{w_2})^2
        \big((z_2\overline{w_2})^n-(z_1\overline{w_1})^m\big)^2},
   \qquad
   \mathcal N(z,w)=\sum_{j=0}^{m-1}N_j(z,w),
\]
where
\[
   N_j(z,w)=\frac{n}{m\pi^2}F_j(z,w)G_j(z,w)
      (z_1\overline{w_1})^j(z_2\overline{w_2})^{n-1-E_j}.
\]
For every $0<\omega<\pi/2$ there exists
$\theta(\omega)>0$, depending only on $m,n$ and $\omega$, such
that the following holds.  If
\[
   z_1\overline{w_1},\ z_2\overline{w_2}
   \in\Gamma_{\theta(\omega)},
\]
then
\[
   |G_j(z,w)|\approx1,
   \qquad
   |F_j(z,w)|\gtrsim
   |z_2\overline{w_2}|^n+|z_1\overline{w_1}|^m,
   \quad 0\le j\le m-1,
\]
and $N_j(z,w)\in\Gamma_\omega$ for every $0\le j\le m-1$.  Consequently,
\begin{equation}\label{eq:cone-no-cancellation}
   |\mathcal N(z,w)|
   \ge \cos\omega\sum_{j=0}^{m-1}|N_j(z,w)|.
\end{equation}
In particular, the full numerator is nonzero on this cone-localized region.
\end{lemma}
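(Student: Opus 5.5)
The plan is to write $u=z_1\overline{w_1}$ and $v=z_2\overline{w_2}$, which lie in the unit disc with $|u|^m<|v|^n$, and to bound the arguments of $F_j$, $G_j$ and the monomial factor of $N_j$ separately. All three bounds come from the product rule $\Gamma_{\theta_1}\Gamma_{\theta_2}\subset\Gamma_{\theta_1+\theta_2}$. Let $M$ be the largest exponent that occurs, for instance $M=m+2n$. We will choose $\theta=\theta(\omega)$ so small that $M\theta<\omega/3$. In particular $\theta<\pi/(4M)$.

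\emph{The factor $F_j$.} If $u,v\in\Gamma_\theta$, then $v^n\in\Gamma_{n\theta}$ and $u^m\in\Gamma_{m\theta}$. Both lie in $\Gamma_{M\theta}$, whose aperture is less than $\pi/2$. For $\zeta\in\Gamma_\alpha$ we have $\operatorname{Re}\zeta\ge|\zeta|\cos\alpha$. Since the coefficients $j+1\ge1$ and $m-j-1\ge0$ are nonnegative, $F_j$ also lies in the convex cone $\Gamma_{M\theta}$, and
\[
|F_j|\ge\operatorname{Re}F_j\ge\cos(M\theta)\bigl((j+1)|v|^n+(m-j-1)|u|^m\bigr)\ge\cos(M\theta)\,|v|^n.
\]
Because $|u|^m<|v|^n$ on $H_{m/n}$, this is $\gtrsim |v|^n+|u|^m$. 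If one wants to avoid using the domain constraint, the case $j=m-1$ is the only one where the coefficient of $|u|^m$ vanishes, and the constraint is exactly what handles it.

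\emph{The factor $G_j$.} Write $G_j=\alpha_j+\beta_j v$ with
\[
\alpha_j=j+1-\tfrac mnE_j,\qquad \beta_j=\tfrac mn(1+E_j)-(j+1).
\]
Since $E_j\le ((j+1)n-1)/m$, we get $\alpha_j\ge 1/n>0$. Since $E_j+1>(j+1)n/m-1/m+\ldots$, more precisely $E_j+1>((j+1)n-1)/m$, we get $\beta_j\ge0$, and in fact $\beta_j>-1/n$ and $\beta_j$ is a rational number with denominator $n$. Thus $G_j$ is a nonnegative combination of $1$ and $v\in\Gamma_\theta$, so $G_j\in\Gamma_\theta$. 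For the size, $|G_j|\le\alpha_j+\beta_j\lesssim1$. Also $|G_j|\ge\operatorname{Re}G_j\ge\alpha_j\ge1/n$, using $\operatorname{Re}v\ge0$. Hence $|G_j|\approx1$ with constants depending only on $m,n$. If $\beta_j$ turned out negative, we would still have $|G_j|\ge\alpha_j-|\beta_j|$ with $\alpha_j+\beta_j=m/n>0$, but checking the sign directly is simpler.

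\emph{The monomial factor and the conclusion.} The factor $u^jv^{\,n-1-E_j}$ lies in $\Gamma_{(j+n-1-E_j)\theta}\subset\Gamma_{M\theta}$, because $0\le E_j\le n-1$ and $j\le m-1$. When $u=0$ or $v=0$, $N_j=0$, which is in $\Gamma_\omega$ by definition. Combining the three factors,
\[
N_j\in\Gamma_{M\theta}\Gamma_{\theta}\Gamma_{M\theta}\subset\Gamma_{(2M+1)\theta}\subset\Gamma_\omega
\]
once $\theta<\omega/(2M+1)$. We therefore fix $\theta(\omega)=\omega/(3M)$, which also gives the constraint used above. Finally, every $N_j\in\Gamma_\omega$ satisfies $\operatorname{Re}N_j\ge|N_j|\cos\omega$, so
\[
|\mathcal N|\ge\operatorname{Re}\mathcal N=\sum_j\operatorname{Re}N_j\ge\cos\omega\sum_j|N_j|,
\]
which is \eqref{eq:cone-no-cancellation}. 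Nonvanishing follows when $u,v\ne0$. In that case $N_0\ne0$, since its factors $F_0$, $G_0$ and the monomial are all nonzero.

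The only delicate point is the sign bookkeeping for $\alpha_j$ and $\beta_j$, and the use of $|u|^m<|v|^n$ in the lower bound for $F_j$ when $j=m-1$.
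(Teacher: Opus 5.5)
Your proof is correct. For $F_j$, the monomial factor and the final real-part summation it follows the paper's argument: convexity of the cone, $\operatorname{Re}\zeta\ge|\zeta|\cos\alpha$, and $|u|^m<|v|^n$.

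\textbf{Where you differ: the factor $G_j$.} The paper never uses the sign of the linear coefficient $h_j$ (your $\beta_j$). It writes $G_j=A_j(r)+h_jr(e^{i\varphi}-1)$ with $A_j(r)=(1-r)g_j+r\frac{m}{n}\ge d_0>0$. It then shrinks $\theta$ until the perturbation is below $\frac{d_0}{2}\min\{1,\tan\frac{\omega}{3}\}$, and obtains $|G_j|\approx 1$ and $|\arg G_j|<\omega/3$ from a tangent estimate.

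You use integrality instead. From $mE_j\le (j+1)n-1<m(E_j+1)$ you get $m(E_j+1)\ge (j+1)n$, i.e.\ $\beta_j\ge 0$. So $G_j$ is a nonnegative combination of $1$ and $v$, lies in $\Gamma_\theta$ immediately, and satisfies $1/n\le\operatorname{Re}G_j\le|G_j|\le m/n$.

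\textbf{What each route buys.} Yours is shorter, gives a sharper angle bound for $G_j$, and gives an explicit $\theta(\omega)$. The paper's perturbation argument is more robust: it uses only $g_j>0$ and $g_j+h_j=m/n>0$, so it would survive a negative linear coefficient. As written, your derivation reads backwards. The floor inequality gives $\beta_j>-1/n$, and only then does $n\beta_j\in\Z$ upgrade this to $\beta_j\ge 0$.

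\textbf{Minor repairs.} With $\theta(\omega)=\omega/(3M)$ you get $M\theta=\omega/3$, not $M\theta<\omega/3$. This is harmless, since only $\cos(M\theta)>0$ and $(2M+1)\theta<\omega$ are used.

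The aside ``when $u=0$ or $v=0$, $N_j=0$'' is false for $j=0$. If $u=0$ then $N_0=\frac{n}{m\pi^2}F_0G_0v^{\,n-1-E_0}$, which is nonzero. For the same reason you should not restrict the nonvanishing claim to $u\ne0$. On $H_{m/n}$ one always has $v=z_2\overline{w_2}\ne0$, because $|z_2|^n>|z_1|^m\ge 0$. Your argument for $N_0\ne 0$ never uses $u\ne 0$. Hence $\mathcal N\ne 0$ on the whole cone-localized region, including points with $z_1\overline{w_1}=0$, which is what the paper states.
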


\begin{proof}
Put $s=z_1\overline{w_1}$ and $t=z_2\overline{w_2}$.  Write
\[
   G_j(z,w)=g_j+h_j t,
   \qquad
   g_j=j+1-\frac mnE_j,
   \qquad
   h_j=\frac mn+\frac mnE_j-j-1.
\]
Since
\[
   E_j\le \frac{(j+1)n-1}{m},
\]
we have $g_j\ge1/n$, while $g_j+h_j=m/n$.  Hence
\[
   d_0=\min_{0\le j\le m-1}\min\left\{g_j,\frac mn\right\}>0,
   \qquad
   H_0=\max_{0\le j\le m-1}|h_j|<\infty.
\]
If $t=re^{i\varphi}$, then
\begin{equation}\label{eq:G-positive-axis-perturbation}
   G_j(z,w)=(1-r)g_j+r\frac mn+h_jr(e^{i\varphi}-1).
\end{equation}
Set
\[
   A_j(r)=(1-r)g_j+r\frac mn.
\]
Then
\begin{equation}\label{eq:Aj-lower-bound}
   (1-r)g_j+r\frac mn=A_j(r)\ge d_0,
   \qquad 0\le r<1,\quad 0\le j\le m-1.
\end{equation}
Choose $\theta>0$ so small that, whenever $|\varphi|<\theta$,
\begin{equation}\label{eq:G-perturb-small}
   H_0|e^{i\varphi}-1|
   < \frac{d_0}{2}
      \min\left\{1,\tan\frac{\omega}{3}\right\}.
\end{equation}
Write
\[
   G_j(z,w)=A_j(r)+E_j(r,\varphi),
   \qquad
   E_j(r,\varphi)=h_jr(e^{i\varphi}-1).
\]
By \eqref{eq:G-perturb-small},
\[
   |E_j(r,\varphi)|
   \le H_0|e^{i\varphi}-1|
   <\frac{d_0}{2}
   \le \frac{A_j(r)}{2}.
\]
Hence
\[
   \Re G_j(z,w)\ge A_j(r)-|E_j(r,\varphi)|\ge\frac{A_j(r)}2\ge\frac{d_0}{2}.
\]
If
\[
   d_1=\max_{0\le j\le m-1}\max\left\{g_j,\frac mn\right\},
\]
then $A_j(r)\le d_1$, and therefore
\[
   \frac{d_0}{2}\le |G_j(z,w)|\le d_1+\frac{d_0}{2}.
\]
This proves $|G_j(z,w)|\approx1$, uniformly in $j,z,w$.  Moreover, set
$\tau=\tan(\omega/3)$.  Since $0<\omega/3<\pi/6$, we have $0<\tau<1$.
By \eqref{eq:Aj-lower-bound} and \eqref{eq:G-perturb-small},
\[
   |E_j(r,\varphi)|<\frac{\tau}{2}A_j(r).
\]
Therefore
\[
   \left|\tan\bigl(\arg G_j(z,w)\bigr)\right|
   =\frac{|\Im G_j(z,w)|}{\Re G_j(z,w)}
   \le \frac{|E_j(r,\varphi)|}{A_j(r)-|E_j(r,\varphi)|}
   <\frac{\tau/2}{1-\tau/2}<\tau=\tan\frac{\omega}{3}.
\]
Since $\Re G_j(z,w)>0$ and $0<\omega/3<\pi/2$, it follows that
\[
   |\arg G_j(z,w)|<\frac{\omega}{3}.
\]

Next set $X=t^n$ and $Y=s^m$.  Because $z,w\in H_{m/n}$,
\begin{equation}\label{eq:Y-smaller-X}
   |Y|=|z_1|^m|w_1|^m<|z_2|^n|w_2|^n=|X|.
\end{equation}
After decreasing $\theta$, we may assume
$\max\{m,n\}\theta<\omega/3$.  Thus $X,Y\in\Gamma_{\omega/3}$.
Since this cone is convex and
\[
   F_j=(j+1)X+(m-j-1)Y
\]
has nonnegative coefficients, $F_j\in\Gamma_{\omega/3}$.  Also,
because $X,Y\in\Gamma_{\omega/3}$,
\begin{align*}
   |F_j|
   &\ge \Re F_j \\
   &=(j+1)\Re X+(m-j-1)\Re Y \\
   &\ge \cos\frac{\omega}{3}
       \big((j+1)|X|+(m-j-1)|Y|\big) \\
   &\ge \frac12\cos\frac{\omega}{3}(|X|+|Y|),
\end{align*}
where the last inequality follows from \eqref{eq:Y-smaller-X} and
\[
   (j+1)|X|+(m-j-1)|Y|
   \ge |X|
   \ge \frac12(|X|+|Y|).
\]
Finally, the exponent $n-1-E_j$ is nonnegative, and
\[
   j+n-1-E_j\le m+n-2.
\]
Choose $\theta$ still smaller so that
$(m+n-2)\theta<\omega/3$.  The monomial
$s^j t^{n-1-E_j}$ then has argument of absolute value less than $\omega/3$
whenever it is nonzero.  The three factors in $N_j$ therefore have total
argument of absolute value less than $\omega$, proving
$N_j\in\Gamma_\omega$.  We denote this final value by $\theta(\omega)$.

Because $\omega<\pi/2$, every nonzero $N_j$ has positive real part and
\[
   \Re\mathcal N(z,w)
   =\sum_j\Re N_j(z,w)
   \ge\cos\omega\sum_j|N_j(z,w)|.
\]
Since $|\mathcal N|\ge\Re\mathcal N$, this proves
\eqref{eq:cone-no-cancellation}.  To obtain nonvanishing, note that
$t=z_2\overline{w_2}\ne0$.  The preceding estimates give
$F_0(z,w)\ne0$ and $G_0(z,w)\ne0$, while
$t^{n-1-E_0}\ne0$.  Thus $N_0(z,w)\ne0$, so the right-hand side of
\eqref{eq:cone-no-cancellation} is positive and $\mathcal N(z,w)\ne0$.
\end{proof}

\begin{lemma}\label{lem:global-finite-sum}
Here $\Phi$, $R$, and $\rho$ are defined by \eqref{eq:PhiRrho}.  For
$0\le j\le m-1$, put
\[
   \kappa_j=E_j+1-\frac{nj}{m},
   \qquad
   A_j(z)=|\Phi(z)|^{j/m}|z_2|^{-\kappa_j}.
\]
Then, for all $z,w\in H_{m/n}$,
\begin{equation}\label{eq:global-offdiag-finitesum}
   |B_{m/n}(z,w)|
   \lesssim
   \frac{\sum_{j=0}^{m-1}A_j(z)A_j(w)}
        {|1-z_2\overline{w_2}|^2
        |1-\Phi(z)\overline{\Phi(w)}|^2},
\end{equation}
while on the diagonal
\begin{equation}\label{eq:global-diagonal-finitesum}
   B_{m/n}(z,z)
   \approx
   \frac{\sum_{j=0}^{m-1}A_j(z)^2}{R(z)^2\rho(z)^2}.
\end{equation}
Consequently,
\begin{equation}\label{eq:global-positive-majorant}
\begin{aligned}
&\frac{|B_{m/n}(z,w)|^{c/2}}
        {B_{m/n}(z,z)^{a/2}B_{m/n}(w,w)^{b/2}}       \\
&\qquad\lesssim
   \frac{R(z)^a\rho(z)^aR(w)^b\rho(w)^b}
        {|1-z_2\overline{w_2}|^c|1-\Phi(z)\overline{\Phi(w)}|^c}
   \frac{\left(\sum_{j=0}^{m-1}A_j(z)A_j(w)\right)^{c/2}}
        {\left(\sum_{j=0}^{m-1}A_j(z)^2\right)^{a/2}
         \left(\sum_{j=0}^{m-1}A_j(w)^2\right)^{b/2}}.
\end{aligned}
\end{equation}
\end{lemma}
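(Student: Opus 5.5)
We work directly from the decomposition in Lemma~\ref{lem:subkernel-formula}, factoring each summand $K_j$ into a product of powers of $\Phi$, $z_2$, $w_2$ together with the two "Bergman-disc" denominators. Put $s=z_1\overline{w_1}$, $t=z_2\overline{w_2}$. The key algebraic identity is
\[
(z_2\overline{w_2})^n-(z_1\overline{w_1})^m=t^n\bigl(1-\Phi(z)\overline{\Phi(w)}\bigr),
\]
so that
\[
K_j=\frac{n}{m\pi^2}\,\frac{F_jG_j\,s^j t^{n-1-E_j}}{(1-t)^2\,t^{2n}\,(1-\Phi(z)\overline{\Phi(w)})^2}.
\]

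\emph{Upper bound.} First, $|G_j|\lesssim1$ trivially, since $|t|<1$ and the coefficients are fixed. Second, $|F_j|\lesssim|t|^n+|s|^m\lesssim|t|^n$, because $|s|^m<|t|^n$ on $H_{m/n}$. This gives
\[
|K_j|\lesssim\frac{|s|^j|t|^{-1-E_j}}{|1-t|^2|1-\Phi(z)\overline{\Phi(w)}|^2}.
\]
Next we rewrite $|s|^j=|z_1|^j|w_1|^j$ using $|z_1|^j=|\Phi(z)|^{j/m}|z_2|^{nj/m}$. Then
\[
|s|^j|t|^{-1-E_j}=|\Phi(z)|^{j/m}|\Phi(w)|^{j/m}\,|z_2w_2|^{nj/m-1-E_j}=A_j(z)A_j(w),
\]
since $nj/m-1-E_j=-\kappa_j$. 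Summing over $j$ with the triangle inequality gives \eqref{eq:global-offdiag-finitesum}.

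\emph{Diagonal estimate.} Set $w=z$. Then $s=|z_1|^2\ge0$ and $t=|z_2|^2\in(0,1)$, so we are in the cone-localized region of Lemma~\ref{lem:numerator-cone} for every $\omega$ (the arguments are zero). Moreover every $N_j$ is a nonnegative real number: $F_j>0$, and $G_j=A_j(r)\ge d_0>0$ by \eqref{eq:Aj-lower-bound}. Hence there is no cancellation and $B(z,z)=\sum_j K_j(z,z)$ with each $K_j\ge0$. For the lower bounds of each factor on the diagonal:
\[
G_j\ge d_0,\qquad F_j\ge(j+1)|z_2|^{2n}\ge|t|^n.
\]
Together with the upper bounds above, this gives $K_j(z,z)\approx A_j(z)^2/(\rho^2R^2)$, using $1-t=\rho(z)$ and $1-|\Phi(z)|^2=R(z)$. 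Summing over $j$ yields \eqref{eq:global-diagonal-finitesum}. The only point requiring care is to check that the positivity of $g_j$ and $g_j+h_j=m/n$ gives a uniform lower bound for $G_j$ on $[0,1)$. This is exactly \eqref{eq:Aj-lower-bound}, so the diagonal estimate is essentially immediate.

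\emph{Consequence.} For the majorant \eqref{eq:global-positive-majorant}, raise \eqref{eq:global-offdiag-finitesum} to the power $c/2>0$; this is legitimate since both sides are nonnegative. Then divide by the $a/2$ and $b/2$ powers of the two-sided diagonal estimate. Because the diagonal estimate is two-sided, it may be used for both signs of $a$ and $b$: a $\approx$ relation raised to any real power remains a $\approx$ relation. The main (mild) obstacle is therefore only bookkeeping of exponents in the identity $|s|^j|t|^{-1-E_j}=A_j(z)A_j(w)$ and the factorization of $t^n-s^m$; no cancellation issue arises for the upper bound, since we use the triangle inequality.
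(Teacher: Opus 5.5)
Your proposal is correct and follows essentially the same route as the paper. You use the bounds $|F_j|\le m|z_2\overline{w_2}|^n$ and $|G_j|\lesssim 1$, the factorization $(z_2\overline{w_2})^n-(z_1\overline{w_1})^m=(z_2\overline{w_2})^n(1-\Phi(z)\overline{\Phi(w)})$, and the exponent identity $nj/m-1-E_j=-\kappa_j$, then give termwise two-sided estimates for the nonnegative terms $K_j(z,z)$; the only cosmetic difference is that you import $G_j(z,z)\ge d_0$ from \eqref{eq:Aj-lower-bound} (note the clash between that $A_j(r)$ and this lemma's $A_j(z)$), whereas the paper rederives $G_j(z,z)=(1-t)g_j+t\,m/n$ directly.
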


\begin{proof}
Fix $j\in\{0,\ldots,m-1\}$.  Since $z,w\in H_{m/n}$,
\[
   |(z_1\overline{w_1})^m|=|z_1|^m|w_1|^m
   < |z_2|^n|w_2|^n=|z_2\overline{w_2}|^n.
\]
Therefore the numerator factor $F_j$ in Lemma~\ref{lem:subkernel-formula}
satisfies
\[
\begin{aligned}
   |F_j(z,w)|
   &\le (j+1)|z_2\overline{w_2}|^n
      +(m-j-1)|(z_1\overline{w_1})^m|  \\
   &\le m |z_2\overline{w_2}|^n.
\end{aligned}
\]
Moreover, writing
\[
   G_j(z,w)=g_j+h_j z_2\overline{w_2},
   \qquad
   g_j=j+1-\frac mnE_j,
   \quad
   h_j=\frac mn+\frac mnE_j-j-1,
\]
we have $|z_2\overline{w_2}|<1$; hence
\[
   |G_j(z,w)|\le |g_j|+|h_j|,
\]
which is a constant depending only on $m,n,j$.  Also
\[
\begin{aligned}
   (z_2\overline{w_2})^n-(z_1\overline{w_1})^m
   &=(z_2\overline{w_2})^n
     \left(1-\frac{(z_1\overline{w_1})^m}{(z_2\overline{w_2})^n}\right) \\
   &=(z_2\overline{w_2})^n
     \left(1-\Phi(z)\overline{\Phi(w)}\right).
\end{aligned}
\]
Substituting these estimates into the subkernel formula gives
\[
\begin{aligned}
   |K_j(z,w)|
   &\lesssim
   \frac{|z_2\overline{w_2}|^n
          |z_1\overline{w_1}|^j
          |z_2\overline{w_2}|^{n-1-E_j}}
        {|1-z_2\overline{w_2}|^2
         |z_2\overline{w_2}|^{2n}
         |1-\Phi(z)\overline{\Phi(w)}|^2}  \\
   &=
   \frac{|z_1\overline{w_1}|^j
          |z_2\overline{w_2}|^{-1-E_j}}
        {|1-z_2\overline{w_2}|^2
         |1-\Phi(z)\overline{\Phi(w)}|^2}.
\end{aligned}
\]
Since
\[
   |z_1|^j=|\Phi(z)|^{j/m}|z_2|^{nj/m},
   \qquad
   |w_1|^j=|\Phi(w)|^{j/m}|w_2|^{nj/m},
\]
it follows that
\[
\begin{aligned}
   |K_j(z,w)|
   &\lesssim
   \frac{|\Phi(z)|^{j/m}|\Phi(w)|^{j/m}
          |z_2\overline{w_2}|^{nj/m-1-E_j}}
        {|1-z_2\overline{w_2}|^2
         |1-\Phi(z)\overline{\Phi(w)}|^2} \\
   &=
   \frac{|\Phi(z)|^{j/m}|z_2|^{-\kappa_j}
          |\Phi(w)|^{j/m}|w_2|^{-\kappa_j}}
        {|1-z_2\overline{w_2}|^2
         |1-\Phi(z)\overline{\Phi(w)}|^2} \\
   &=
   \frac{A_j(z)A_j(w)}
        {|1-z_2\overline{w_2}|^2
         |1-\Phi(z)\overline{\Phi(w)}|^2}.
\end{aligned}
\]
Summing over $j$ gives \eqref{eq:global-offdiag-finitesum}.

We next prove the diagonal estimate.  Put $w=z$ in the preceding subkernel
formula.  Then
\[
   F_j(z,z)=(j+1)|z_2|^{2n}+(m-j-1)|z_1|^{2m}.
\]
Since $|z_1|^{2m}<|z_2|^{2n}$, we have
\[
   (j+1)|z_2|^{2n}
   \le F_j(z,z)
   \le m |z_2|^{2n},
\]
and hence $F_j(z,z)\approx |z_2|^{2n}$.  For $G_j$, set
$t=|z_2|^2\in(0,1)$.  With the constants $g_j,h_j$ defined above,
\[
   G_j(z,z)=g_j+h_j t.
\]
The definition
\[
   E_j=\left\lfloor \frac{(j+1)n-1}{m}\right\rfloor
\]
implies
\[
   E_j\le \frac{(j+1)n-1}{m},
   \qquad\text{and hence}\qquad
   g_j=j+1-\frac mnE_j\ge \frac1n>0.
\]
Furthermore,
\[
   g_j+h_j=\frac mn.
\]
Thus
\[
   G_j(z,z)=(1-t)g_j+t\frac mn,
\]
so $G_j(z,z)$ lies between the two positive constants
$\min\{g_j,m/n\}$ and $\max\{g_j,m/n\}$.  Consequently
$G_j(z,z)\approx1$, with constants depending only on $m,n,j$.

Finally,
\[
\begin{aligned}
   &(z_2\overline{z_2})^n-(z_1\overline{z_1})^m  \\
   &\qquad = |z_2|^{2n}
      \left(1-\frac{|z_1|^{2m}}{|z_2|^{2n}}\right)
      = |z_2|^{2n}R(z),
\end{aligned}
\]
and $1-z_2\overline{z_2}=\rho(z)$.  Combining these identities with
$F_j(z,z)\approx |z_2|^{2n}$ and $G_j(z,z)\approx1$, we obtain
\[
\begin{aligned}
   K_j(z,z)
   &\approx
   \frac{|z_2|^{2n}|z_1|^{2j}|z_2|^{2n-2-2E_j}}
        {\rho(z)^2 |z_2|^{4n}R(z)^2}   \\
   &=
   \frac{|\Phi(z)|^{2j/m}|z_2|^{2nj/m-2-2E_j}}
        {R(z)^2\rho(z)^2}  \\
   &=
   \frac{|\Phi(z)|^{2j/m}|z_2|^{-2\kappa_j}}
        {R(z)^2\rho(z)^2}
    =\frac{A_j(z)^2}{R(z)^2\rho(z)^2}.
\end{aligned}
\]
Since the spaces in the Edholm--McNeal decomposition are orthogonal,
$B_{m/n}(z,z)=\sum_{j=0}^{m-1}K_j(z,z)$, and
\eqref{eq:global-diagonal-finitesum} follows.  Finally,
\eqref{eq:global-positive-majorant} is obtained by raising
\eqref{eq:global-offdiag-finitesum} to the power $c/2$ and using
\eqref{eq:global-diagonal-finitesum} for the two diagonal factors.  The diagonal
estimate is two-sided and $B_{m/n}(\xi,\xi)>0$, so the division by the diagonal
powers is legitimate for all real $a,b$, including negative values; the factors
$R(z)^a$ and $R(w)^b$ in \eqref{eq:global-positive-majorant} are therefore valid
without any sign restriction on $a$ or $b$.
\end{proof}

\begin{lemma}\label{lem:critical-diagonal}
For every fixed $0<\delta<1$,
\begin{equation}\label{eq:critical-diagonal}
   B_{m/n}(z,z)
   \approx
   \frac{|z_2|^{-2\kappa}}{R(z)^2\rho(z)^2},
   \qquad
   \delta\le \frac{|z_1|^m}{|z_2|^n}<1.
\end{equation}
Here $R(z)=1-|z_1|^{2m}/|z_2|^{2n}$ and $\rho(z)=1-|z_2|^2$.
The implicit constants may depend on $\delta$, but not on $z$.
\end{lemma}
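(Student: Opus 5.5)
Starting from the diagonal estimate \eqref{eq:global-diagonal-finitesum} of Lemma~\ref{lem:global-finite-sum}, the claim reduces to
\[
   \sum_{j=0}^{m-1}A_j(z)^2\approx |z_2|^{-2\kappa}
\]
on the region $\delta\le|\Phi(z)|<1$. On this region $|\Phi(z)|^{2j/m}\in[\delta^{2j/m},1]$, so each term satisfies $A_j(z)^2\approx |z_2|^{-2\kappa_j}$, with constants depending on $\delta$. Since $|z_2|<1$, the sum $\sum_j |z_2|^{-2\kappa_j}$ is comparable to $|z_2|^{-2\max_j\kappa_j}$. It therefore suffices to prove
\[
   \max_{0\le j\le m-1}\kappa_j=\kappa=\frac{m+n-1}{m}.
\]
The upper bound $\sum_j\lesssim |z_2|^{-2\max\kappa_j}$ holds because there are $m$ terms. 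The lower bound follows from keeping only the single term with the largest exponent.

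To compute $\kappa_j=E_j+1-nj/m$, I write $(j+1)n-1=mE_j+r_j$ with $0\le r_j\le m-1$. This gives
\[
   \kappa_j=\frac{(j+1)n-1-r_j}{m}+1-\frac{nj}{m}=\frac{n-1-r_j+m}{m}.
\]
Maximizing $\kappa_j$ therefore means minimizing the remainder $r_j$, where $r_j\equiv n(j+1)-1\pmod m$. Because $\gcd(m,n)=1$, the map $j\mapsto n(j+1)\bmod m$ is a bijection of $\{0,\dots,m-1\}$ onto the residues modulo $m$. Hence there is a unique $j_*$ with $n(j_*+1)\equiv1\pmod m$, and for it $r_{j_*}=0$. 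This $j_*$ is exactly the index singled out in the introduction. Substituting gives $\kappa_{j_*}=(m+n-1)/m=\kappa$, and $\kappa_j\le\kappa$ for all $j$ since $r_j\ge0$. The case $m=1$ is consistent: then $j_*=0$, $E_0=n-1$, and $\kappa=n$.

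The main obstacle is the lower bound, and it comes from the factor $|\Phi(z)|^{2j_*/m}$ in $A_{j_*}(z)^2$. This factor can be small near the $z_2$-axis, which is precisely why the hypothesis $|\Phi|\ge\delta$ is imposed. Under that hypothesis, $A_{j_*}(z)^2\ge\delta^{2j_*/m}|z_2|^{-2\kappa}$, which gives $\sum_j A_j(z)^2\gtrsim |z_2|^{-2\kappa}$. Combining this with \eqref{eq:global-diagonal-finitesum} yields \eqref{eq:critical-diagonal}, with constants depending only on $m,n$ and $\delta$.
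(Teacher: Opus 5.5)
Your proof is correct and follows the paper's argument essentially verbatim. The upper bound comes from $\kappa_j\le\kappa$ and $|z_2|<1$, the lower bound comes from keeping only the term $A_{j_*}(z)^2\ge\delta^{2j_*/m}|z_2|^{-2\kappa}$, and both are fed into the two-sided diagonal estimate \eqref{eq:global-diagonal-finitesum}; your remainder formula $\kappa_j=(m+n-1-r_j)/m$ is just a more explicit form of the paper's floor inequality $E_j\le((j+1)n-1)/m$.
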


\begin{proof}
We derive this from Lemma~\ref{lem:global-finite-sum}.  First, for every
$0\le j\le m-1$,
\[
   \kappa_j=E_j+1-\frac{nj}{m}
   \le \frac{(j+1)n-1}{m}+1-\frac{nj}{m}
   =\frac{m+n-1}{m}=\kappa.
\]
Since $0\le |\Phi(z)|<1$ and $0<|z_2|<1$, we have
\[
   A_j(z)=|\Phi(z)|^{j/m}|z_2|^{-\kappa_j}
   \le |z_2|^{-\kappa}.
\]
Hence
\[
   \sum_{j=0}^{m-1}A_j(z)^2\lesssim |z_2|^{-2\kappa}.
\]
By \eqref{eq:global-diagonal-finitesum},
\[
   B_{m/n}(z,z)\lesssim
   \frac{|z_2|^{-2\kappa}}{R(z)^2\rho(z)^2}.
\]
It remains to prove the reverse inequality under the hypothesis
$\delta\le |\Phi(z)|<1$.  Since $\gcd(m,n)=1$, multiplication by $n$
permutes the residue classes modulo $m$.  Hence there is a unique
$j_*\in\{0,\ldots,m-1\}$ such that
\[
   n(j_*+1)\equiv 1 \pmod m.
\]
Then
\[
   E_{j_*}=\frac{n(j_*+1)-1}{m},
   \qquad
   \kappa_{j_*}=E_{j_*}+1-\frac{nj_*}{m}
   =\frac{m+n-1}{m}=\kappa.
\]
Therefore, if $\delta\le |\Phi(z)|<1$, then
\[
   \sum_{j=0}^{m-1}A_j(z)^2
   \ge A_{j_*}(z)^2
   = |\Phi(z)|^{2j_*/m}|z_2|^{-2\kappa}
   \gtrsim |z_2|^{-2\kappa},
\]
where the implicit constant in this lower bound depends on $\delta$.  The
lower bound in \eqref{eq:critical-diagonal} follows again from
\eqref{eq:global-diagonal-finitesum}.
\end{proof}

\begin{lemma}\label{lem:power-substitution}
Fix $\xi\in\Dstar$.
\begin{enumerate}[label=\textup{(\roman*)}]
\item For every nonnegative measurable function $F$ on $\D$,
\begin{equation}\label{eq:exact-power-substitution}
   \int_{|w_1|^m<|\xi|^n}
      F\!\left(\frac{w_1^m}{\xi^n}\right)\dA(w_1)
   =\frac{|\xi|^{2n/m}}{m}
      \int_\D F(u)|u|^{2/m-2}\dA(u).
\end{equation}
\item Let $V\subset\Dstar$ lie in a simply connected region on which a branch
of $u^{1/m}$ is fixed, and choose one value of $\xi^{n/m}$.  If
\[
   T_\xi(u)=\xi^{n/m}u^{1/m},\qquad u\in V,
\]
then $T_\xi$ is one-to-one and
\begin{equation}\label{eq:one-sheet-power-substitution}
   \int_{T_\xi(V)}
      F\!\left(\frac{w_1^m}{\xi^n}\right)\dA(w_1)
   =\frac{|\xi|^{2n/m}}{m^2}
      \int_V F(u)|u|^{2/m-2}\dA(u).
\end{equation}
\end{enumerate}
Thus the full-fiber formula contains the multiplicity factor $1/m$, whereas
the change of variables in \textup{(ii)} has Jacobian factor $1/m^2$.
\end{lemma}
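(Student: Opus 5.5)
The plan is to reduce both identities to the planar change of variables $w_1\mapsto u=w_1^m/\xi^n$. This map is holomorphic with derivative $m w_1^{m-1}/\xi^n$, so its real Jacobian is
\[
   J(w_1)=\frac{m^2|w_1|^{2m-2}}{|\xi|^{2n}}.
\]
Since $|w_1|=|\xi|^{n/m}|u|^{1/m}$, this becomes $J=m^2|\xi|^{2n(m-1)/m-2n}|u|^{(2m-2)/m}=m^2|\xi|^{-2n/m}|u|^{2-2/m}$. Consequently $\dA(w_1)=J^{-1}\dA(u)=m^{-2}|\xi|^{2n/m}|u|^{2/m-2}\dA(u)$ wherever the map is locally invertible, that is, for $w_1\neq0$. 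The origin has measure zero, so it can be ignored throughout.

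I would prove (ii) first. On $V$ the branch $u^{1/m}$ is a well-defined holomorphic injection; it is injective because $(u^{1/m})^m=u$. Multiplying by the nonzero constant $\xi^{n/m}$ keeps it injective, so $T_\xi$ is a biholomorphism of $V$ onto $T_\xi(V)$ with inverse $w_1\mapsto w_1^m/\xi^n$. Indeed $T_\xi(u)^m=\xi^n u$ for the chosen value of $\xi^{n/m}$, since $(\xi^{n/m})^m=\xi^n$. Its Jacobian is $|T_\xi'(u)|^2=|\xi|^{2n/m}m^{-2}|u|^{2/m-2}$. The standard change of variables for a $C^1$ diffeomorphism applied to the nonnegative measurable function $F(w_1^m/\xi^n)=F(u)$ then gives \eqref{eq:one-sheet-power-substitution} directly.

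For (i), note that the domain $\{|w_1|^m<|\xi|^n\}$ minus the origin is exactly the preimage of $\Dstar$ under $w_1\mapsto w_1^m/\xi^n$, and the map is an $m$-to-$1$ covering. I would cut $\Dstar$ along a ray, say $V=\D\setminus(\{0\}\cup\{u:\operatorname{Arg}u=\pi\})$, which is simply connected. On $V$ there are $m$ branches of $u^{1/m}$, differing by the $m$-th roots of unity. Their images $T^{(k)}_\xi(V)$, $k=0,\dots,m-1$, are disjoint open sectors, and together with a null set (finitely many rays and the origin) they exhaust the disk $\{|w_1|<|\xi|^{n/m}\}$. Applying (ii) to each sheet and summing gives $m\cdot m^{-2}=1/m$ times $|\xi|^{2n/m}\int_V F(u)|u|^{2/m-2}\dA(u)$. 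Removing the null slit from $V$ yields the integral over $\D$, which is \eqref{eq:exact-power-substitution}.

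The only point needing care is the sheet decomposition in (i): one must check that the sectors are pairwise disjoint and that they cover the domain up to measure zero, so that the multiplicity is exactly $m$. This follows because each $w_1\neq0$ off the preimage rays has exactly one argument representative in each sector. Everything else is a routine Jacobian computation. Nonnegativity of $F$ lets Tonelli-type monotone convergence justify the identities even when the integrals are infinite.
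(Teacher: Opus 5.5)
Your proposal is correct and takes essentially the same route as the paper: the Jacobian computation for (ii) is identical, and for (i) the paper invokes the area formula (summing $|\Psi_\xi'|^{-2}$ over the $m$ preimages of each $u$), which your slit-disk decomposition into $m$ sectors, with (ii) applied on each sheet, carries out by hand. One small wording fix: the precise fact you need is that each $w_1\neq0$ off the cut rays lies in exactly one sector, or equivalently that each $u\in V$ has exactly one preimage in each sector.
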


\begin{proof}
Consider the proper map
\[
   \Psi_\xi(w_1)=\frac{w_1^m}{\xi^n},
   \qquad |w_1|<|\xi|^{n/m}.
\]
For $u\in\Dstar$, the equation $\Psi_\xi(w_1)=u$ has exactly $m$ solutions,
all of modulus $|\xi|^{n/m}|u|^{1/m}$.  Since
\[
   \Psi_\xi'(w_1)=\frac{mw_1^{m-1}}{\xi^n},
\]
the area formula gives
\begin{align*}
   \sum_{\Psi_\xi(w_1)=u}\frac{1}{|\Psi_\xi'(w_1)|^2}
   &=m\frac{|\xi|^{2n}}
      {m^2\bigl(|\xi|^{n/m}|u|^{1/m}\bigr)^{2m-2}} \\
   &=\frac{|\xi|^{2n/m}}{m}|u|^{2/m-2}.
\end{align*}
Integrating against $F(u)\dA(u)$ proves
\eqref{eq:exact-power-substitution}.

For the branch fixed in \textup{(ii)},
\[
   T_\xi'(u)=\frac{\xi^{n/m}}{m}u^{1/m-1},
\]
so
\[
   |T_\xi'(u)|^2
   =\frac{|\xi|^{2n/m}}{m^2}|u|^{2/m-2}.
\]
The change-of-variables theorem yields
\eqref{eq:one-sheet-power-substitution}.
\end{proof}

\begin{lemma}\label{lem:disc-estimate}
Let $\mu>-1$, $\nu>-2$, $\lambda>0$, and let $A,\omega\in\R$.  Assume
\begin{equation}\label{eq:three-case-condition}
   A>\omega,
   \qquad
   A+\mu+2-\lambda\ge \omega.
\end{equation}
Then, uniformly for $\zeta\in\D$,
\begin{equation}\label{eq:three-case-consequence}
   (1-|\zeta|^2)^A\int_\D
   \frac{(1-|u|^2)^\mu |u|^\nu}
        {|1-\zeta\overline u|^\lambda}\dA(u)
   \lesssim (1-|\zeta|^2)^\omega.
\end{equation}
\end{lemma}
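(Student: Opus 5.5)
The plan is to reduce to the classical Forelli--Rudin estimate on the disc after separating off the weight $|u|^\nu$ near the origin. Write
\[
J(\zeta)=\int_\D \frac{(1-|u|^2)^\mu |u|^\nu}{|1-\zeta\overline u|^\lambda}\dA(u),
\]
and split $\D$ into $\{|u|<1/2\}$ and $\{|u|\ge 1/2\}$.

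On $\{|u|<1/2\}$ we have $|1-\zeta\overline u|\ge 1/2$ and $(1-|u|^2)^\mu\approx1$. Since $\nu>-2$, the function $|u|^\nu$ is integrable near $0$, so this piece is bounded by a constant uniformly in $\zeta\in\D$. On $\{|u|\ge1/2\}$ we have $|u|^\nu\approx1$, so this piece is dominated by
\[
I_{\mu,\lambda}(\zeta)=\int_\D \frac{(1-|u|^2)^\mu}{|1-\zeta\overline u|^\lambda}\dA(u).
\]
For $\mu>-1$, the standard one-variable Forelli--Rudin estimate (Rudin; Zhu, Theorem~1.7) gives
\[
I_{\mu,\lambda}(\zeta)\approx
\begin{cases}
1, & \lambda<\mu+2,\\[2pt]
\log\dfrac{2}{1-|\zeta|^2}, & \lambda=\mu+2,\\[6pt]
(1-|\zeta|^2)^{\mu+2-\lambda}, & \lambda>\mu+2.
\end{cases}
\]
Hence, writing $\delta=1-|\zeta|^2\in(0,1]$, we obtain $J(\zeta)\lesssim 1+I_{\mu,\lambda}(\zeta)$.

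It remains to check $\delta^A\,(1+I_{\mu,\lambda})\lesssim\delta^\omega$ in each case, using $\delta\le1$.

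\begin{enumerate}[label=\textup{(\roman*)}]
\item The constant term: $\delta^A\le\delta^\omega$ because $A>\omega$.
\item Case $\lambda<\mu+2$: the bound is again $\delta^A\lesssim\delta^\omega$.
\item Case $\lambda>\mu+2$: we get $\delta^{A+\mu+2-\lambda}\le\delta^\omega$ by the second hypothesis in \eqref{eq:three-case-condition}.
\item Case $\lambda=\mu+2$: we need $\delta^{A}\log(2/\delta)\lesssim\delta^\omega$. This holds because $\delta^{A-\omega}\log(2/\delta)$ is bounded on $(0,1]$, precisely since $A-\omega>0$.
\end{enumerate}

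The only delicate point is the logarithmic borderline case (iv). There the strict inequality $A>\omega$ is what absorbs the logarithm, whereas the non-strict second condition suffices in case (iii) because no logarithm appears there. The requirement $\lambda>0$ is not really needed beyond ensuring that we are in the standard setting of the Forelli--Rudin estimate.
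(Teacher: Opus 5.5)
Your proof is correct and has the same structure as the paper's. You split at $|u|=1/2$, reduce to the three-case bound (bounded, logarithmic, or $(1-|\zeta|^2)^{\mu+2-\lambda}$), and then multiply by $(1-|\zeta|^2)^A$, using $A>\omega$ to absorb the constant and the logarithm and the second hypothesis in the case $\lambda>\mu+2$. The only difference is that the paper proves the three-case bound by hand rather than citing the classical Forelli--Rudin estimate. It does so via polar coordinates, the comparison $|1-sre^{-i\vartheta}|\approx(1-sr)+|\vartheta|$, and a split of the radial integral at $t=1-s$.
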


\begin{proof}
Set
\[
   I(\zeta)=\int_\D
   \frac{(1-|u|^2)^\mu |u|^\nu}
        {|1-\zeta\overline u|^\lambda}\dA(u),
   \qquad X=1-|\zeta|^2.
\]
We claim that
\begin{equation}\label{eq:direct-disc-three-cases}
   I(\zeta)\lesssim
   \begin{cases}
      1, & \mu+2-\lambda>0,\\[1mm]
      \log\dfrac{e}{X}, & \mu+2-\lambda=0,\\[2mm]
      X^{\mu+2-\lambda}, & \mu+2-\lambda<0.
   \end{cases}
\end{equation}
On $|u|\le1/2$, the denominator is bounded below and
$\int_{|u|\le1/2}|u|^\nu\dA(u)<\infty$ because $\nu>-2$.  On
$|u|>1/2$, one has $|u|^\nu\approx1$, with constants depending only on
$\nu$.  By rotation invariance assume $\zeta=s\in[0,1)$ and write
$u=re^{i\vartheta}$.  Uniformly for
$1/2<r<1$ and $|\vartheta|\le\pi$,
\[
   1-r^2\approx1-r,
   \qquad
   |1-sre^{-i\vartheta}|\approx(1-sr)+|\vartheta|.
\]
For the second comparison, put $a=sr$.  The upper bound follows from
\[
   |1-ae^{-i\vartheta}|
   \le (1-a)+a|1-e^{-i\vartheta}|
   \lesssim (1-a)+|\vartheta|.
\]
For the lower bound, if $a\le1/2$, then
$|1-ae^{-i\vartheta}|\ge1-a\ge1/2$, whereas
$(1-a)+|\vartheta|\le1+\pi$.  If $a>1/2$, then
\[
   |1-ae^{-i\vartheta}|^2
   =(1-a)^2+2a(1-\cos\vartheta)
   \gtrsim (1-a)^2+|\vartheta|^2,
\]
because $1-\cos\vartheta\approx \vartheta^2$ for $|\vartheta|\le\pi$.
Hence $|1-ae^{-i\vartheta}|\gtrsim (1-a)+|\vartheta|$, and the comparison
follows.
It follows that the remaining part of $I(\zeta)$ is bounded by a constant
multiple of
\[
   \int_{1/2}^1(1-r)^\mu
      \left[\int_{-\pi}^{\pi}
      \frac{d\vartheta}{((1-sr)+|\vartheta|)^\lambda}\right]dr.
\]
The bracketed integral is bounded by
\[
   \begin{cases}
      1, & 0<\lambda<1,\\
      \log\dfrac{e}{1-sr}, & \lambda=1,\\[2mm]
      (1-sr)^{1-\lambda}, & \lambda>1.
   \end{cases}
\]
Put $t=1-r$ and $\delta=1-s$.  Since
$1-sr=\delta+st\approx\delta+t$ on the range under consideration, it remains
to estimate
\[
   \int_0^1t^\mu\Psi_\lambda(\delta+t)\,dt,
\]
where $\Psi_\lambda(y)$ is respectively $1$, $\log(e/y)$, or
$y^{1-\lambda}$.  The cases $\lambda<1$ and $\lambda=1$ are uniformly bounded;
for the latter, use
\[
   \log\frac{e}{\delta+t}\le\log\frac{e}{t},
   \qquad
   \int_0^1t^\mu\log\frac{e}{t}\,dt<\infty.
\]
If $\lambda>1$, split at $t=\delta$:
\begin{align*}
   \int_0^1t^\mu(\delta+t)^{1-\lambda}\,dt
   &\lesssim
   \delta^{1-\lambda}\int_0^\delta t^\mu\,dt
   +\int_\delta^1t^{\mu+1-\lambda}\,dt  \\
   &\lesssim
   \begin{cases}
      1, & \mu+2-\lambda>0,\\[1mm]
      \log\dfrac{e}{\delta}, & \mu+2-\lambda=0,\\[2mm]
      \delta^{\mu+2-\lambda}, & \mu+2-\lambda<0.
   \end{cases}
\end{align*}
Because $\delta\le X=(1-s)(1+s)\le2\delta$, this proves
\eqref{eq:direct-disc-three-cases}.

We now multiply by $X^A$.  If $\lambda<\mu+2$, then
$X^A I(\zeta)\lesssim X^A\le X^\omega$ because $A>\omega$.  If
$\lambda=\mu+2$, then
\[
   X^A I(\zeta)
   \lesssim X^\omega
      \left(X^{A-\omega}\log\frac{e}{X}\right)
   \lesssim X^\omega.
\]
If $\lambda>\mu+2$, then
\[
   X^A I(\zeta)\lesssim X^{A+\mu+2-\lambda}
   \le X^\omega
\]
by the second assumption in \eqref{eq:three-case-condition}.  This proves
\eqref{eq:three-case-consequence}.
\end{proof}

\begin{lemma}\cite[Lemma~5.2]{Liu2015}\label{lem:schur}
Let $(X,\mu)$ be a measure space, $1<p<\infty$, and $q=p/(p-1)$.  Let
$T\ge0$ be a measurable function on $X\times X$.  If there is a positive
measurable function $h$ such that
\[
   \int_X T(x,y)h(y)^q\,d\mu(y)\lesssim h(x)^q,
   \qquad
   \int_X T(x,y)h(x)^p\,d\mu(x)\lesssim h(y)^p,
\]
then the integral operator with kernel $T$ is bounded on $L^p(X,d\mu)$.
\end{lemma}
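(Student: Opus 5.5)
The plan is the classical Hölder-splitting argument, followed by Tonelli's theorem.

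Fix $f\in L^p(X,d\mu)$ and write $\mathcal T f(x)=\int_X T(x,y)f(y)\,d\mu(y)$. Since $T\ge0$ and $h>0$, for each $x$ I factor the integrand as
\[
   T(x,y)|f(y)|
   =\bigl(T(x,y)^{1/q}h(y)\bigr)\bigl(T(x,y)^{1/p}h(y)^{-1}|f(y)|\bigr),
\]
using $1/p+1/q=1$. Hölder's inequality with exponents $q$ and $p$ gives
\[
   \int_X T(x,y)|f(y)|\,d\mu(y)
   \le\Bigl(\int_X T(x,y)h(y)^q\,d\mu(y)\Bigr)^{1/q}
      \Bigl(\int_X T(x,y)h(y)^{-p}|f(y)|^p\,d\mu(y)\Bigr)^{1/p}.
\]
The first hypothesis bounds the first factor by a constant multiple of $h(x)$.

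Raising to the $p$-th power and integrating in $x$, I obtain
\[
   \int_X\Bigl(\int_X T(x,y)|f(y)|\,d\mu(y)\Bigr)^p d\mu(x)
   \lesssim\int_X\int_X h(x)^pT(x,y)h(y)^{-p}|f(y)|^p\,d\mu(y)\,d\mu(x).
\]
All integrands are nonnegative and measurable on $X\times X$, so Tonelli's theorem lets me integrate in $x$ first. I need $\mu$ to be $\sigma$-finite for this step, which holds in the application to Lebesgue measure on $H_{m/n}$. The second hypothesis gives $\int_X T(x,y)h(x)^p\,d\mu(x)\lesssim h(y)^p$. This cancels the factor $h(y)^{-p}$ and leaves $\lesssim\|f\|_{L^p}^p$.

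It follows that $\int_X T(x,y)|f(y)|\,d\mu(y)<\infty$ for $\mu$-a.e.\ $x$. Hence $\mathcal T f$ is defined almost everywhere, $|\mathcal Tf|$ is dominated pointwise by the integral above, and $\|\mathcal Tf\|_{L^p}\lesssim\|f\|_{L^p}$.

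There is no real obstacle. The only points needing care are the measurability and $\sigma$-finiteness needed for Tonelli, and the a.e.\ finiteness of $\mathcal Tf$; both come out of the same nonnegative estimate.
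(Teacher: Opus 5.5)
Your proof is correct: it is the standard H\"older--Tonelli proof of Schur's test. The paper gives no proof of its own and simply cites Liu's Lemma~5.2. Your caveat that Tonelli needs $\sigma$-finiteness is appropriate for the general statement, and it is harmless here because the lemma is only applied to Lebesgue measure on the bounded domain $H_{m/n}$.
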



\section{Sufficiency}

\begin{proposition}\label{prop:sufficiency}
If $\mathcal C(a,b,c;p)$ holds, then $S^+_{a,b,c}$ is bounded on
$L^p(H_{m/n})$.
\end{proposition}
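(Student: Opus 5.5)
We apply Schur's test (Lemma~\ref{lem:schur}) to the majorant \eqref{eq:global-positive-majorant}. The positive kernel is dominated by
\[
T(z,w)=\frac{R(z)^a\rho(z)^aR(w)^b\rho(w)^b}{|1-z_2\overline{w_2}|^c|1-\Phi(z)\overline{\Phi(w)}|^c}\,
\frac{\bigl(\sum_j A_j(z)A_j(w)\bigr)^{c/2}}{\bigl(\sum_jA_j(z)^2\bigr)^{a/2}\bigl(\sum_jA_j(w)^2\bigr)^{b/2}}.
\]
The second factor is handled by elementary comparisons. By Cauchy--Schwarz,
\[
\sum_j A_j(z)A_j(w)\le \Bigl(\sum_j A_j(z)^2\Bigr)^{1/2}\Bigl(\sum_j A_j(w)^2\Bigr)^{1/2}.
\]
From the proof of Lemma~\ref{lem:critical-diagonal} we also have $|\Phi(z)|^{2j_*/m}|z_2|^{-2\kappa}\le\sum_jA_j(z)^2\lesssim|z_2|^{-2\kappa}$. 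We want to bound the second factor by a product $|z_2|^{-\alpha}|w_2|^{-\beta}$ times a harmless power of $|\Phi|$. When the relevant exponents are negative (e.g.\ $a>c/2$), the lower bound produces a factor $|\Phi(z)|^{-\text{const}}$. This is integrable after the substitution of Lemma~\ref{lem:power-substitution}, since it shifts $\nu$ in Lemma~\ref{lem:disc-estimate}; the constraint $\nu>-2$ must be checked. This bookkeeping reduces $T$ to
\[
T\lesssim \frac{R(z)^a\rho(z)^a|z_2|^{-\kappa(c/2-a)}\,R(w)^b\rho(w)^b|w_2|^{-\kappa(c/2-b)}}{|1-z_2\overline{w_2}|^c|1-\Phi(z)\overline{\Phi(w)}|^c}\,|\Phi(z)|^{\cdots}|\Phi(w)|^{\cdots}.
\]

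For the test function we take
\[
h(w)=R(w)^{-s}\rho(w)^{-t}|w_2|^{-r}
\]
with parameters $s,t,r$ to be chosen. In the $w$-integral we use coordinates $(u,w_2)$ with $u=\Phi(w)\in\D$, $w_2\in\Dstar$. By Lemma~\ref{lem:power-substitution}(i), for fixed $w_2$ we have $dA(w_1)=\frac1m|w_2|^{2n/m}|u|^{2/m-2}dA(u)$. The integral then factors into a disc integral in $u$, namely
\[
\int_\D\frac{(1-|u|^2)^{b-sq}|u|^{\nu}}{|1-\Phi(z)\overline u|^{c}}\,dA(u),
\]
and a disc integral in $w_2$, namely
\[
\int_\D\frac{(1-|w_2|^2)^{b-tq}|w_2|^{2n/m-\kappa(c/2-b)+rq}}{|1-z_2\overline{w_2}|^{c}}\,dA(w_2).
\]
Each is controlled by Lemma~\ref{lem:disc-estimate}, with $\zeta=\Phi(z)$ and $\zeta=z_2$ respectively. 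The multiplicative prefactors $R(z)^a$ and $\rho(z)^a$ play the role of $(1-|\zeta|^2)^A$, and $\omega=-sq$ (respectively $-tq$). The powers of $|z_2|$ must balance. Since the $w_2$-integral is bounded near $w_2=0$ only as a constant, we need $|z_2|^{-\kappa(c/2-a)}\lesssim |z_2|^{-rq}$, i.e.\ $rq\ge\kappa(c/2-a)$. We also need local integrability at $w_2=0$, i.e.\ exponent $>-2$. The same is done for the second Schur integral with $p$ and the roles of $a,b$ swapped.

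Finally, we verify that the parameter constraints are simultaneously solvable exactly under $\mathcal C(a,b,c;p)$.
\begin{itemize}[label=--]
\item Lemma~\ref{lem:disc-estimate} requires $\mu>-1$ and $A>\omega$ for both the $R$ and $\rho$ directions. These give $-1<b-sq$, $a>-sq$, $-1<a-sp$ and $b>-sp$, and such an $s$ exists iff \eqref{cond:boundary} holds.
\item The second condition, $A+\mu+2-\lambda\ge\omega$, gives $a+b+2-c\ge0$, which is \eqref{cond:pinch} (the same for both directions).
\item The vertex constraints on $r$ are $rq\ge\kappa(c/2-a)$ together with
\[
2n/m-\kappa(c/2-b)-rq>-2,
\]
and their $p$-analogues. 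With $\kappa=(m+n-1)/m$ and $2+2n/m=2(m+n)/m$, existence of $r$ reduces to the two inequalities of \eqref{cond:vertex}.
\end{itemize}

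The main obstacle is the vertex analysis. The bound $|z_2|^{-\kappa}$ near the origin must be carried through precisely, together with the extra $|\Phi|$-powers from the lower bound on $\sum_jA_j^2$ when $a$ or $b$ exceeds $c/2$. One must also confirm that these weights, though needed only when $|\Phi|$ is small, do not spoil the conditions. If the $|\Phi|$-weights cause trouble, I would first split $H_{m/n}$ into $\{|\Phi|\ge\delta\}$, where Lemma~\ref{lem:critical-diagonal} applies cleanly, and $\{|\Phi|<\delta\}$, where each $A_j$ term is treated separately with its own $\kappa_j\le\kappa$.
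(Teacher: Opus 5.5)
You follow the paper's architecture: Cauchy--Schwarz on $\sum_jA_j(z)A_j(w)$, a product weight $R^{-s}\rho^{-t}|z_2|^{-r}$, the fibre substitution of Lemma~\ref{lem:power-substitution}(i), and Lemma~\ref{lem:disc-estimate} in each factor. Your bookkeeping for $s$, $t$ and for \eqref{cond:pinch} is correct. The gap is in how you handle negative powers of $P(z)=\sum_jA_j(z)^2$. When $a>c/2$ or $b>c/2$ you use $P\ge A_{j_*}^2=|\Phi|^{2j_*/m}|z_2|^{-2\kappa}$. This lower bound degenerates as $\Phi\to0$ whenever $j_*\ge1$, and the resulting $|\Phi|$-weights are not harmless.

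\begin{itemize}
\item \emph{Fixed variable.} In the first Schur integral the factor $|\Phi(z)|^{-j_*(2a-c)/(2m)}$ is not integrated. It must be dominated by $h(z)^q=R(z)^{-sq}\rho(z)^{-tq}|z_2|^{-rq}$. That quantity stays bounded as $\Phi(z)\to0$ with $z_2$ fixed, so the Schur inequality cannot be verified from your majorant.
\item \emph{Integrated variable.} The $u$-integral acquires the exponent $\nu=\frac2m-2-\frac{j_*(2b-c)}{2m}$, and $\nu>-2$ fails once $j_*(2b-c)\ge4$. This can happen while $\mathcal C(a,b,c;p)$ holds. For $(m,n)=(3,2)$ one has $j_*=1$, and $(a,b,c,p)=(0,3,1,2)$ satisfies \eqref{cond:boundary}--\eqref{cond:pinch}. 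Yet it produces $|u|^{-13/6}$, which is not integrable at $u=0$.
\item \emph{Fallback.} Splitting at $|\Phi|=\delta$ does not say how to bound $P(z)^{-a/2}$ from above on $\{|\Phi|<\delta\}$, and that is exactly the difficulty.
\end{itemize}

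The missing observation is elementary. The term $A_0(z)=|z_2|^{-\kappa_0}$ carries no power of $\Phi$, and $\kappa_0=E_0+1>0$, so $P(z)\ge A_0(z)^2\ge1$. Combined with $P\lesssim|z_2|^{-2\kappa}$, this gives $P(z)^t\lesssim|z_2|^{-2\kappa t_+}$ for every real $t$. You should therefore discard the potential gain when $a>c/2$, and use $\kappa(c/2-a)_+$ and $\kappa(c/2-b)_+$ in place of $\kappa(c/2-a)$ and $\kappa(c/2-b)$. No $|\Phi|$-weights then arise, and the ratio variable carries only $|u|^{2/m-2}$. Your vertex constraints still reduce exactly to \eqref{cond:vertex}, because each inequality there is automatic when the corresponding quantity $c-2a$ or $c-2b$ is nonpositive. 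This is precisely what the paper does. A minor point: with $h=|w_2|^{-r}$, the exponent in your displayed $w_2$-integral should be $2n/m-\kappa(c/2-b)-rq$ (as you use later), not $+rq$.
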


\begin{proof}
Let $q=p/(p-1)$.  For $0\le j\le m-1$, retain the quantities
$A_j$ and $\kappa_j$ from Lemma~\ref{lem:global-finite-sum}, and set
\[
   P(z)=\sum_{j=0}^{m-1}A_j(z)^2,
   \qquad
   Q(z,w)=\sum_{j=0}^{m-1}A_j(z)A_j(w).
\]
Denote the kernel of $S^+_{a,b,c}$ by
\[
   \mathcal K(z,w)=
   \frac{|B_{m/n}(z,w)|^{c/2}}
        {B_{m/n}(z,z)^{a/2}B_{m/n}(w,w)^{b/2}}.
\]
Lemma~\ref{lem:global-finite-sum} gives
\begin{equation}\label{eq:schur-finite-sum}
\begin{aligned}
   \mathcal K(z,w)
   \lesssim{}&
   \frac{R(z)^a\rho(z)^aR(w)^b\rho(w)^b}
        {|1-z_2\overline{w_2}|^c
         |1-\Phi(z)\overline{\Phi(w)}|^c} \\
   &\times
   \frac{Q(z,w)^{c/2}}
        {P(z)^{a/2}P(w)^{b/2}}.
\end{aligned}
\end{equation}
By Cauchy--Schwarz, $Q(z,w)\le P(z)^{1/2}P(w)^{1/2}$, and hence
\begin{equation}\label{eq:schur-P-kernel}
   \mathcal K(z,w)
   \lesssim
   \frac{R(z)^a\rho(z)^aR(w)^b\rho(w)^b
          P(z)^{(c-2a)/4}P(w)^{(c-2b)/4}}
        {|1-z_2\overline{w_2}|^c
         |1-\Phi(z)\overline{\Phi(w)}|^c}.
\end{equation}
This estimate is valid globally and does not require
$|\Phi(z)|$ or $|\Phi(w)|$ to be bounded away from zero.

For all $j$, $\kappa_j\le\kappa$.  Since $|\Phi(z)|<1$ and $|z_2|<1$,
\[
   A_j(z)\le |z_2|^{-\kappa},
   \qquad
   P(z)\lesssim |z_2|^{-2\kappa}.
\]
On the other hand, $A_0(z)=|z_2|^{-\kappa_0}\ge1$, so $P(z)\ge1$.
Consequently, for every $t\in\R$,
\begin{equation}\label{eq:P-real-power}
   P(z)^t\lesssim |z_2|^{-2\kappa t_+},
   \qquad t_+=\max\{t,0\}.
\end{equation}
Recalling that $\kappa=(m+n-1)/m$, set
\[
   \ell_a=\frac\kappa2(c-2a)_+,
   \qquad
   \ell_b=\frac\kappa2(c-2b)_+.
\]
Equations \eqref{eq:schur-P-kernel} and \eqref{eq:P-real-power} imply
\begin{equation}\label{eq:schur-vertex-kernel}
   \mathcal K(z,w)
   \lesssim
   \frac{R(z)^a\rho(z)^aR(w)^b\rho(w)^b
          |z_2|^{-\ell_a}|w_2|^{-\ell_b}}
        {|1-z_2\overline{w_2}|^c
         |1-\Phi(z)\overline{\Phi(w)}|^c}.
\end{equation}

We next choose a Schur weight.  Condition \eqref{cond:boundary} is equivalent to
\[
   a>-\frac1p,
   \qquad
   b>-\frac1q.
\]
Consider the intervals
\[
   I_q=\left(-\frac{b+1}{q},\frac aq\right),
   \qquad
   I_p=\left(-\frac{a+1}{p},\frac bp\right).
\]
They have nonempty intersection.  Indeed, $a+b+1>0$ follows from the two
preceding inequalities, so each interval is nonempty; moreover,
\[
   -\frac{b+1}{q}<\frac bp
   \quad\Longleftrightarrow\quad b>-\frac1q,
   \qquad
   -\frac{a+1}{p}<\frac aq
   \quad\Longleftrightarrow\quad a>-\frac1p.
\]
Choose $\sigma_1,\sigma_2\in I_q\cap I_p$.  Then, for $i=1,2$,
\begin{equation}\label{eq:sigma-conditions}
   b+q\sigma_i>-1,
   \quad q\sigma_i<a,
   \quad a+p\sigma_i>-1,
   \quad p\sigma_i<b.
\end{equation}

Put
\[
   d_0=\frac{2n}{m}+2=\frac{2(m+n)}m.
\]
The two inequalities in \eqref{cond:vertex} are equivalent to
\begin{equation}\label{eq:vertex-loss-conditions}
   p\ell_a<d_0,
   \qquad
   q\ell_b<d_0.
\end{equation}
When $c-2a>0$, the first relation is exactly
$(c-2a)(m+n-1)p<4(m+n)$. If $c-2a\le0$, both statements are automatic.
Similarly, the second relation follows by writing the right-hand
inequality in \eqref{cond:vertex} as
$q(c-2b)(m+n-1)<4(m+n)$.

We claim that
\begin{equation}\label{eq:tau-interval}
   \left(\frac{-d_0+\ell_b}{q},-\frac{\ell_a}{q}\right]
   \cap
   \left(\frac{-d_0+\ell_a}{p},-\frac{\ell_b}{p}\right]
\end{equation}
is nonempty.  From \eqref{eq:vertex-loss-conditions},
$\ell_a<d_0/p$ and $\ell_b<d_0/q$, hence
$\ell_a+\ell_b<d_0$.  Thus each interval in \eqref{eq:tau-interval} is
nonempty.  The two cross inequalities are precisely
\[
   \frac{-d_0+\ell_b}{q}< -\frac{\ell_b}{p}
   \quad\Longleftrightarrow\quad q\ell_b<d_0,
\]
and
\[
   \frac{-d_0+\ell_a}{p}< -\frac{\ell_a}{q}
   \quad\Longleftrightarrow\quad p\ell_a<d_0.
\]
Choose $\tau$ in the intersection.  Equivalently,
\begin{align}
   \frac{2n}{m}+q\tau-\ell_b&>-2,
   &q\tau&\le-\ell_a,\label{eq:tau-q}\\
   \frac{2n}{m}+p\tau-\ell_a&>-2,
   &p\tau&\le-\ell_b.\label{eq:tau-p}
\end{align}
Set
\[
   h(z)=R(z)^{\sigma_1}\rho(z)^{\sigma_2}|z_2|^\tau.
\]

We prove the first Schur inequality.  From
\eqref{eq:schur-vertex-kernel},
\begin{equation}\label{eq:first-schur-start}
\begin{aligned}
&\int_{H_{m/n}}\mathcal K(z,w)h(w)^q\dV(w) \\
&\quad\lesssim
R(z)^a\rho(z)^a|z_2|^{-\ell_a}
\int_{H_{m/n}}
\frac{R(w)^{b+q\sigma_1}\rho(w)^{b+q\sigma_2}
      |w_2|^{q\tau-\ell_b}}
     {|1-z_2\overline{w_2}|^c
      |1-\Phi(z)\overline{\Phi(w)}|^c}
\dV(w).
\end{aligned}
\end{equation}
For fixed $w_2\in\Dstar$, apply
Lemma~\ref{lem:power-substitution}\textup{(i)} with $u=\Phi(w)$.  The integral on the right of \eqref{eq:first-schur-start} equals $1/m$
times the following product integral; the fixed factor $1/m$ is absorbed into
the implicit constant:
\begin{equation}\label{eq:first-schur-product}
\begin{aligned}
&\int_{\Dstar}
   \frac{(1-|w_2|^2)^{b+q\sigma_2}
          |w_2|^{2n/m+q\tau-\ell_b}}
        {|1-z_2\overline{w_2}|^c} \\
&\qquad\times
   \left[
   \int_\D
   \frac{(1-|u|^2)^{b+q\sigma_1}|u|^{2/m-2}}
        {|1-\Phi(z)\overline u|^c}\dA(u)
   \right]\dA(w_2).
\end{aligned}
\end{equation}
The exponent $2/m-2$ is strictly greater than $-2$, exactly as required at
$u=0$.

Apply Lemma~\ref{lem:disc-estimate} to the inner integral with
\[
   \mu=b+q\sigma_1,
   \quad \nu=\frac2m-2,
   \quad \lambda=c,
   \quad A=a,
   \quad \omega=q\sigma_1.
\]
The first and second inequalities in \eqref{eq:sigma-conditions} give
$\mu>-1$ and $A>\omega$, while \eqref{cond:pinch} gives
\[
   A+\mu+2-\lambda
   =q\sigma_1+(a+b+2-c)\ge q\sigma_1.
\]
Therefore
\begin{equation}\label{eq:first-inner-disc}
   R(z)^a
   \int_\D
   \frac{(1-|u|^2)^{b+q\sigma_1}|u|^{2/m-2}}
        {|1-\Phi(z)\overline u|^c}\dA(u)
   \lesssim R(z)^{q\sigma_1}.
\end{equation}
Apply Lemma~\ref{lem:disc-estimate} to the $w_2$-integral with
\[
   \mu=b+q\sigma_2,
   \quad \nu=\frac{2n}{m}+q\tau-\ell_b,
   \quad \lambda=c,
   \quad A=a,
   \quad \omega=q\sigma_2.
\]
Its hypotheses follow from \eqref{eq:sigma-conditions},
\eqref{eq:tau-q}, and \eqref{cond:pinch}.  Hence
\begin{equation}\label{eq:first-outer-disc}
   \rho(z)^a
   \int_\D
   \frac{(1-|w_2|^2)^{b+q\sigma_2}
          |w_2|^{2n/m+q\tau-\ell_b}}
        {|1-z_2\overline{w_2}|^c}\dA(w_2)
   \lesssim\rho(z)^{q\sigma_2}.
\end{equation}
Finally, $q\tau\le-\ell_a$ and $0<|z_2|<1$ imply
$|z_2|^{-\ell_a}\le|z_2|^{q\tau}$.  Combining
\eqref{eq:first-schur-start}--\eqref{eq:first-outer-disc} yields
\begin{equation}\label{eq:first-schur-final}
   \int_{H_{m/n}}\mathcal K(z,w)h(w)^q\dV(w)
   \lesssim h(z)^q.
\end{equation}

We now verify the second Schur inequality.  Interchanging the
roles of $z$ and $w$ in \eqref{eq:schur-vertex-kernel} gives
\begin{equation}\label{eq:second-schur-start}
\begin{aligned}
&\int_{H_{m/n}}\mathcal K(z,w)h(z)^p\dV(z) \\
&\quad\lesssim
R(w)^b\rho(w)^b|w_2|^{-\ell_b}
\int_{H_{m/n}}
\frac{R(z)^{a+p\sigma_1}\rho(z)^{a+p\sigma_2}
      |z_2|^{p\tau-\ell_a}}
     {|1-z_2\overline{w_2}|^c
      |1-\Phi(z)\overline{\Phi(w)}|^c}
\dV(z).
\end{aligned}
\end{equation}
Use Lemma~\ref{lem:power-substitution}\textup{(i)} with $v=\Phi(z)$. The
integral on the right of \eqref{eq:second-schur-start} equals $1/m$ times
\begin{equation}\label{eq:second-schur-product}
\begin{aligned}
&\int_{\Dstar}
   \frac{(1-|z_2|^2)^{a+p\sigma_2}
          |z_2|^{2n/m+p\tau-\ell_a}}
        {|1-z_2\overline{w_2}|^c} \\
&\qquad\times
   \left[
   \int_\D
   \frac{(1-|v|^2)^{a+p\sigma_1}|v|^{2/m-2}}
        {|1-v\overline{\Phi(w)}|^c}\dA(v)
   \right]\dA(z_2).
\end{aligned}
\end{equation}
Apply Lemma~\ref{lem:disc-estimate} to the $v$-integral with
\[
   (\mu,\nu,\lambda,A,\omega)
   =\left(a+p\sigma_1,\frac2m-2,c,b,p\sigma_1\right),
\]
and to the $z_2$-integral with
\[
   (\mu,\nu,\lambda,A,\omega)
   =\left(a+p\sigma_2,
      \frac{2n}{m}+p\tau-\ell_a,c,b,p\sigma_2\right).
\]
The required inequalities are respectively
\eqref{eq:sigma-conditions}, \eqref{eq:tau-p}, and
\eqref{cond:pinch}.  Hence
\begin{equation}\label{eq:second-inner-disc}
   R(w)^b
   \int_\D
   \frac{(1-|v|^2)^{a+p\sigma_1}|v|^{2/m-2}}
        {|1-v\overline{\Phi(w)}|^c}\dA(v)
   \lesssim R(w)^{p\sigma_1},
\end{equation}
and
\begin{equation}\label{eq:second-outer-disc}
   \rho(w)^b
   \int_\D
   \frac{(1-|z_2|^2)^{a+p\sigma_2}
          |z_2|^{2n/m+p\tau-\ell_a}}
        {|1-z_2\overline{w_2}|^c}\dA(z_2)
   \lesssim \rho(w)^{p\sigma_2}.
\end{equation}
Since $p\tau\le-\ell_b$,
$|w_2|^{-\ell_b}\le|w_2|^{p\tau}$, and therefore
\begin{equation}\label{eq:second-schur-final}
   \int_{H_{m/n}}\mathcal K(z,w)h(z)^p\dV(z)
   \lesssim h(w)^p.
\end{equation}
Equations \eqref{eq:first-schur-final} and
\eqref{eq:second-schur-final} are the hypotheses of
Lemma~\ref{lem:schur}.  Thus $S^+_{a,b,c}$ is bounded on
$L^p(H_{m/n})$.
\end{proof}

\section{Necessary conditions}

\begin{lemma}\label{lem:conical-lower}
There exists $\theta_*>0$, depending only on $m,n$, such that
$\theta_*<\pi/(10(m+n))$ and the following statements hold for every
$0<\theta_0\le\theta_*$.  Suppose
\[
   z_1\overline{w_1}\in\Gamma_{\theta_0},
   \qquad
   z_2\overline{w_2}\in\Gamma_{\theta_0}.
\]
\begin{enumerate}[label=\textup{(\roman*)}]
\item For every fixed $0<\delta<1/2$, if
\[
   \delta |z_2\overline{w_2}|^n
   \le |z_1\overline{w_1}|^m
   \le (1-\delta)|z_2\overline{w_2}|^n,
\]
then
\begin{equation}\label{eq:conical-critical-lower}
   |B_{m/n}(z,w)|
   \gtrsim
   \frac{|z_2\overline{w_2}|^{-\kappa}}
        {|1-z_2\overline{w_2}|^2
        \left|1-\dfrac{(z_1\overline{w_1})^m}
        {(z_2\overline{w_2})^n}\right|^2}.
\end{equation}
\item For every fixed $0<r<R<1$, if
\[
   r\le |z_2\overline{w_2}|\le R,
\]
then
\begin{equation}\label{eq:conical-pinch-lower}
   |B_{m/n}(z,w)|
   \gtrsim
   \left|1-\frac{(z_1\overline{w_1})^m}
   {(z_2\overline{w_2})^n}\right|^{-2}.
\end{equation}
\end{enumerate}
\end{lemma}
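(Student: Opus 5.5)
Fix $\omega=\pi/4$ and take $\theta_*\le\theta(\omega)$ from Lemma~\ref{lem:numerator-cone}, shrinking it further so that $\theta_*<\pi/(10(m+n))$. For $0<\theta_0\le\theta_*$ the hypotheses of Lemma~\ref{lem:numerator-cone} then hold, since $\Gamma_{\theta_0}\subset\Gamma_{\theta(\omega)}$. Writing $s=z_1\overline{w_1}$ and $t=z_2\overline{w_2}$, inequality \eqref{eq:cone-no-cancellation} gives, for every index $j$,
\[
   |\mathcal N(z,w)|\ge \cos\omega\,|N_j(z,w)|.
\]
Both parts will follow by bounding a single well-chosen $N_j$ from below and dividing by the common denominator. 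The denominator has the exact form
\[
   |1-t|^2\,|t|^{2n}\,\bigl|1-s^m/t^n\bigr|^2,
\]
since $t^n-s^m=t^n(1-s^m/t^n)$. Thus
\[
   |B_{m/n}(z,w)|\gtrsim \frac{|N_j|}{|1-t|^2|t|^{2n}|1-s^m/t^n|^2}.
\]

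For (i), I would take $j=j_*$, the index with $n(j_*+1)\equiv1\pmod m$ from Lemma~\ref{lem:critical-diagonal}, for which $\kappa_{j_*}=\kappa$. By Lemma~\ref{lem:numerator-cone}, $|G_{j_*}|\approx1$ and
\[
   |F_{j_*}|\gtrsim|t|^n+|s|^m\ge|t|^n.
\]
Hence
\[
   |N_{j_*}|\gtrsim |t|^n|s|^{j_*}|t|^{n-1-E_{j_*}}.
\]
The hypothesis $|s|^m\ge\delta|t|^n$ gives
\[
   |s|^{j_*}\ge \delta^{j_*/m}|t|^{nj_*/m}.
\]
Dividing by $|t|^{2n}$ leaves $|t|$ raised to the power
\[
   nj_*/m-1-E_{j_*}=-\kappa_{j_*}=-\kappa,
\]
which is exactly \eqref{eq:conical-critical-lower}. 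The upper restriction $|s|^m\le(1-\delta)|t|^n$ plays no role here; presumably it is included for use later in the necessity arguments.

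For (ii), I would take $j=0$. Then
\[
   |N_0|\gtrsim|t|^n\,|t|^{n-1-E_0}\gtrsim1,
\]
because $|t|\ge r$ and all exponents are fixed. Since $|t|\le R<1$, we have $|1-t|\le2$ and $|t|^{2n}\le1$, so
\[
   |B_{m/n}|\gtrsim|1-s^m/t^n|^{-2},
\]
which is \eqref{eq:conical-pinch-lower}.

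There is no real obstacle here. The only delicate point is already handled by Lemma~\ref{lem:numerator-cone}: the summands $N_j$ cannot cancel inside the cone, which is what justifies keeping a single term. What remains is to check that all constants depend only on $m,n,\delta,r,R$, and that $E_{j_*}=(n(j_*+1)-1)/m$ exactly, so that the power of $|t|$ comes out to precisely $-\kappa$.
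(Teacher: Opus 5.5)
Your proposal is correct and follows the paper's proof essentially step for step: use the no-cancellation inequality from Lemma~\ref{lem:numerator-cone} to keep a single numerator term, take $N_{j_*}$ for (i) and $N_0$ for (ii), and divide by the factored common denominator. The choice $\omega=\pi/4$ instead of the paper's $\pi/3$ is immaterial, and you are right that only the lower ratio bound $|s|^m\ge\delta|t|^n$ is needed in (i); the upper bound already holds on $H_{m/n}$ because $|s|^m<|t|^n$ there.
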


\begin{proof}
Apply Lemma~\ref{lem:numerator-cone} with $\omega=\pi/3$, and choose
\[
   0<\theta_*\le\theta(\pi/3)
\]
small enough that $\theta_*<\pi/(10(m+n))$.  If the two coordinate products lie
in $\Gamma_{\theta_0}$, where $0<\theta_0\le\theta_*$, then
\begin{equation}\label{eq:no-cancellation-half}
   \left|\sum_{j=0}^{m-1}N_j(z,w)\right|
   \ge\frac12\sum_{j=0}^{m-1}|N_j(z,w)|.
\end{equation}
Thus the full kernel is bounded from below by any one chosen numerator term.

For \textup{(i)}, let $j_*$ be the unique index in
$\{0,\ldots,m-1\}$ satisfying
\[
   n(j_*+1)\equiv1\pmod m.
\]
Then
\[
   E_{j_*}=\frac{n(j_*+1)-1}{m},
   \qquad
   \frac{nj_*}{m}-E_{j_*}-1=-\kappa,
\]
and
\[
   G_{j_*}(z,w)=\frac1n+\frac{m-1}{n}z_2\overline{w_2}.
\]
Lemma~\ref{lem:numerator-cone} gives
$|G_{j_*}(z,w)|\gtrsim1$ and
\[
   |F_{j_*}(z,w)|
   \gtrsim |z_2\overline{w_2}|^n
             +|z_1\overline{w_1}|^m.
\]
Under the ratio hypothesis,
\[
   |F_{j_*}(z,w)|\gtrsim |z_2\overline{w_2}|^n,
   \qquad
   |z_1\overline{w_1}|^{j_*}
   \approx |z_2\overline{w_2}|^{nj_*/m}.
\]
Using \eqref{eq:no-cancellation-half} and the common denominator, we obtain
\begin{align*}
   |B_{m/n}(z,w)|
   &\gtrsim
   \frac{|z_1\overline{w_1}|^{j_*}
          |z_2\overline{w_2}|^{2n-1-E_{j_*}}}
        {|1-z_2\overline{w_2}|^2
         |(z_2\overline{w_2})^n-(z_1\overline{w_1})^m|^2}\\
   &\approx
   \frac{|z_2\overline{w_2}|^{nj_*/m-E_{j_*}-1}}
        {|1-z_2\overline{w_2}|^2
         \left|1-\dfrac{(z_1\overline{w_1})^m}
         {(z_2\overline{w_2})^n}\right|^2},
\end{align*}
which is \eqref{eq:conical-critical-lower} because the exponent equals
$-\kappa$.

For \textup{(ii)}, use $N_0$.  Lemma~\ref{lem:numerator-cone} gives
$|G_0(z,w)|\gtrsim1$ and
\[
   |F_0(z,w)|
   =\left|(z_2\overline{w_2})^n
        +(m-1)(z_1\overline{w_1})^m\right|
   \gtrsim |z_2\overline{w_2}|^n.
\]
When $r\le|z_2\overline{w_2}|\le R$, all remaining powers of
$|z_2\overline{w_2}|$ in $N_0$ are bounded above and below by positive
constants.  Hence \eqref{eq:no-cancellation-half} gives
\[
   |B_{m/n}(z,w)|
   \gtrsim
   \frac{1}
        {|1-z_2\overline{w_2}|^2
         |(z_2\overline{w_2})^n-(z_1\overline{w_1})^m|^2}.
\]
On the same region, $|1-z_2\overline{w_2}|\approx1$ and
$|z_2\overline{w_2}|^n\approx1$, with constants depending on $r,R$.
Factoring out $(z_2\overline{w_2})^n$ proves
\eqref{eq:conical-pinch-lower}.
\end{proof}

\begin{lemma}\label{lem:fixed-test}
Let
$a,b\in\R$ and $c>0$.  Fix
\[
   0<\theta_0\le\theta_*,\qquad
   0<r_*<1,\qquad
   0<u_-<u_+<1,\qquad
   0<t_-<t_+<1.
\]
There exist a measurable set $E\Subset H_{m/n}$ with $|E|>0$ and open
intervals $J_2,J_\Phi\subset\R$ such that the following holds. Every
$w\in E$ can be written as
\[
   w=\bigl(w_2^{n/m}u^{1/m},w_2\bigr)
\]
with fixed choices of the fractional powers. If
\[
\begin{gathered}
   z=\bigl(z_2^{n/m}v^{1/m},z_2\bigr),\qquad
   v=\Phi(z)=\frac{z_1^m}{z_2^n},\qquad r_*<|v|<1,\\
   \arg z_2\in J_2,\qquad \arg v\in J_\Phi,
\end{gathered}
\]
then, for every $w\in E$ represented as above,
\[
   z_1\overline{w_1},\quad z_2\overline{w_2},\quad
   v\overline u\in\Gamma_{\theta_0},
\]
and
\begin{equation}\label{eq:fixed-test-lower}
   S^+_{a,b,c}\chi_E(z)
   \gtrsim B_{m/n}(z,z)^{-a/2}|z_2|^{-c\kappa/2}.
\end{equation}
The implicit constant may depend on the displayed fixed parameters and on
$a,b,c,m,n$, but it is independent of $z$.
\end{lemma}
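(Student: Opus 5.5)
The plan is to build $E$ as a small polar box in the coordinates $(w_2,u)$ with $u=\Phi(w)$, concentrated near the positive real axis of every relevant variable. Pick a small $\eta>0$, to be fixed in terms of $\theta_0,m,n$, and define
\[
   E=\Bigl\{\bigl(w_2^{n/m}u^{1/m},w_2\bigr):\ t_-<|w_2|<t_+,\ |\arg w_2|<\eta,\ u_-<|u|<u_+,\ |\arg u|<\eta\Bigr\},
\]
using the principal branches, which are single-valued on these sectors. By Lemma~\ref{lem:power-substitution}(ii), applied fiberwise in $w_2$, the map $(w_2,u)\mapsto w$ is injective with Jacobian $\approx1$ on this box. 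Hence $|E|>0$. Since $|w_2|\le t_+<1$ and $|\Phi(w)|\le u_+<1$, the closure of $E$ stays away from the boundary of $H_{m/n}$, so $E\Subset H_{m/n}$.

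Next set $J_2=J_\Phi=(-\eta,\eta)$. For $z$ as in the statement, $z_2\overline{w_2}$ and $v\overline u$ have arguments less than $2\eta$. Writing $z_1=z_2^{n/m}v^{1/m}$ and $w_1=w_2^{n/m}u^{1/m}$ with principal branches, the argument of $z_1\overline{w_1}$ is at most $(2n/m)\eta+(2/m)\eta$. Choosing $\eta$ with $2(n+1)\eta<\theta_0$ puts all three products in $\Gamma_{\theta_0}$. The ratio $(z_1\overline{w_1})^m/(z_2\overline{w_2})^n$ equals $v\overline u$. Since $r_*<|v|<1$ and $u_-<|u|<u_+$, its modulus lies in $[r_*u_-,u_+]\subset[\delta,1-\delta]$ for a suitable $\delta$. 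So Lemma~\ref{lem:conical-lower}(i) applies and gives
\[
   |B_{m/n}(z,w)|\gtrsim \frac{|z_2\overline{w_2}|^{-\kappa}}{|1-z_2\overline{w_2}|^2|1-v\overline u|^2}.
\]
Because $|u|\le u_+$ and $|w_2|\le t_+$, we have $|1-v\overline u|\le 2$ and $|1-z_2\overline{w_2}|\le2$. Also $|w_2|^{-\kappa}\gtrsim1$. Therefore $|B_{m/n}(z,w)|\gtrsim|z_2|^{-\kappa}$ uniformly for $w\in E$. Here only an upper bound on the denominators is needed, and that upper bound is trivial.

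Raising to the power $c/2>0$ gives $|B_{m/n}(z,w)|^{c/2}\gtrsim|z_2|^{-c\kappa/2}$. For $w\in E$, the diagonal $B_{m/n}(w,w)$ is bounded above and below by positive constants, by Lemma~\ref{lem:global-finite-sum} (or Lemma~\ref{lem:critical-diagonal}), since $R(w)$, $\rho(w)$ and $|w_2|$ are all comparable to $1$ on $E$. Thus $B_{m/n}(w,w)^{-b/2}\approx1$ for any sign of $b$. Integrating over $E$ gives
\[
   S^+_{a,b,c}\chi_E(z)\gtrsim B_{m/n}(z,z)^{-a/2}|z_2|^{-c\kappa/2}|E|,
\]
which is \eqref{eq:fixed-test-lower}. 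The parameter $r_*$ is not otherwise used, except to keep $|v\overline u|$ away from $0$; that is exactly what the ratio hypothesis in Lemma~\ref{lem:conical-lower}(i) requires.

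The main obstacle is branch bookkeeping. The representation $z_1=z_2^{n/m}v^{1/m}$ must use the same branch convention as for $w$, so that the argument of $z_1\overline{w_1}$ is controlled. The statement allows fixed choices of the fractional powers, so I would define $z$'s representation with the principal branches on the sectors $J_2,J_\Phi$. If the statement were instead read as holding for every representation of $z$, the argument of $z_1\overline{w_1}$ could be shifted by an $m$-th root of unity. In that case I would restrict to the single sheet on which the principal representation holds, which is what the phrase ``with fixed choices'' permits.
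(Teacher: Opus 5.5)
Your proposal is correct and follows essentially the same route as the paper. Both arguments take a small polar box in $(w_2,u)$ with $J_2=J_\Phi$ equal to the angular window, apply Lemma~\ref{lem:conical-lower}(i) with $|v|\,|u|$ kept in $[\delta,1-\delta]$, bound the denominators trivially by $2$, and use two-sided bounds for $B_{m/n}(w,w)$ on $E$. The differences are cosmetic: you fix the angular width by $2(n+1)\eta<\theta_0$, and you invoke Lemma~\ref{lem:critical-diagonal} for the diagonal where the paper uses continuity on the compact closure of $E$.
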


\begin{proof}
Choose
\[
   0<\omega<\min\left\{\frac{\theta_0}{4},
   \frac{m\theta_0}{4(n+1)}\right\}
\]
and set
\[
   I_2=J_2=(-\omega,\omega),
   \qquad I_\Phi=J_\Phi=(-\omega,\omega).
\]
On these argument intervals use the single-valued branches
\[
   \lambda^{n/m}=|\lambda|^{n/m}
      e^{\frac nm i\arg\lambda},
   \qquad
   \lambda^{1/m}=|\lambda|^{1/m}
      e^{\frac1m i\arg\lambda}.
\]
Define
\[
\begin{aligned}
   E=\{&(w_2^{n/m}u^{1/m},w_2):
   t_-<|w_2|<t_+,
   \ \arg w_2\in I_2,\\
   &\hspace{34mm}
   u_-<|u|<u_+,
   \ \arg u\in I_\Phi\}.
\end{aligned}
\]
By \eqref{eq:one-sheet-power-substitution}, the parametrization has a
strictly positive Jacobian on a parameter set of positive measure, and hence
$|E|>0$. The strict radial bounds give $E\Subset H_{m/n}$.

Let $z$ be as in the statement and write
$w=(w_2^{n/m}u^{1/m},w_2)\in E$.  The argument restrictions imply
\[
   |\arg(z_2\overline{w_2})|<2\omega<\theta_0
\]
and
\begin{align*}
   |\arg(z_1\overline{w_1})|
   &\le \frac nm|\arg z_2-\arg w_2|
      +\frac1m|\arg v-\arg u|\\
   &<\frac{2(n+1)}{m}\omega<\theta_0.
\end{align*}
Thus both coordinate products lie in $\Gamma_{\theta_0}$. The same
argument restrictions also give
\[
   |\arg(v\overline u)|<2\omega<\theta_0,
\]
so $v\overline u\in\Gamma_{\theta_0}$. Moreover,
\[
   \frac{|z_1\overline{w_1}|^m}
        {|z_2\overline{w_2}|^n}
   =|v|\,|u|.
\]
If
\[
   \delta_0=\frac12\min\{r_*u_-,1-u_+\}>0,
\]
then
\[
   r_*u_-\le |v|\,|u|\le u_+,
\]
and therefore
\[
   \delta_0\le |v|\,|u|\le1-\delta_0.
\]
Lemma~\ref{lem:conical-lower}\textup{(i)} therefore gives
\[
   |B_{m/n}(z,w)|
   \gtrsim
   \frac{|z_2\overline{w_2}|^{-\kappa}}
        {|1-z_2\overline{w_2}|^2|1-v\overline u|^2}.
\]
Since $w_2$ remains in a fixed annulus,
$|z_2\overline{w_2}|^{-\kappa}\approx|z_2|^{-\kappa}$.  Moreover,
$|z_2\overline{w_2}|<1$ and $|v\overline u|<1$, so
\[
   |1-z_2\overline{w_2}|\le2,
   \qquad
   |1-v\overline u|\le2.
\]
Hence
\[
   |B_{m/n}(z,w)|^{c/2}
   \gtrsim |z_2|^{-c\kappa/2},
   \qquad w\in E.
\]
The positive continuous function $B_{m/n}(w,w)$ is bounded above
and below on the compact closure of $E$.  Therefore, for every fixed
$b\in\R$,
\[
   B_{m/n}(w,w)^{-b/2}\approx1,\qquad w\in E.
\]
Integration over $E$ gives \eqref{eq:fixed-test-lower}.
\end{proof}

\begin{lemma}\label{lem:pinching-boxes}
Fix
$0<\theta_0\le\theta_*$ and $0<\psi<\pi/2$.  There exist
$\eps_0>0$, constants $0<r<R<1$, and measurable sets
$E_\eps\subset H_{m/n}$, $0<\eps<\eps_0$, such that
\begin{equation}\label{eq:pinching-volume}
   |E_\eps|\approx\eps^2,
\end{equation}
and, for all $z,w\in E_\eps$,
\begin{equation}\label{eq:pinching-cones}
\begin{gathered}
   z_1\overline{w_1},\ z_2\overline{w_2}
   \in\Gamma_{\theta_0},\\
   1-z_2\overline{w_2},\; 
   1-\frac{(z_1\overline{w_1})^m}
           {(z_2\overline{w_2})^n}
   \in\Gamma_\psi.
\end{gathered}
\end{equation}
\begin{equation}\label{eq:pinching-moduli}
   r\le|z_2\overline{w_2}|\le R,
   \qquad
   R(z)\approx R(w)\approx\eps,
   \qquad
   \left|1-\frac{(z_1\overline{w_1})^m}
   {(z_2\overline{w_2})^n}\right|\approx\eps.
\end{equation}
All constants may depend on $m,n,\theta_0,\psi$, but not on
$\eps,z,w$.
\end{lemma}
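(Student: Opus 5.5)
The plan is to construct $E_\eps$ explicitly in the coordinates $(w_2,u)$ with $u=\Phi(w)$, using the one-sheet parametrization of Lemma~\ref{lem:power-substitution}\textup{(ii)}. Fix a small $\omega>0$, to be chosen later in terms of $\theta_0$, $\psi$, $m$, $n$. Then set
\[
   E_\eps=\bigl\{(w_2^{n/m}u^{1/m},w_2):\ \tfrac12<|w_2|<\tfrac34,\ |\arg w_2|<\omega,\ 1-\eps<|u|<1,\ |\arg u|<\eps\bigr\},
\]
with the same branches as in the proof of Lemma~\ref{lem:fixed-test}. Here $w_2$ ranges over a fixed sector of an annulus and $u$ over a box of side comparable to $\eps$ touching the point $1$.

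\emph{Volume.} By \eqref{eq:one-sheet-power-substitution}, for fixed $w_2$ the $w_1$-measure of the fiber is $\frac{|w_2|^{2n/m}}{m^2}\int_{V_\eps}|u|^{2/m-2}\dA(u)$. Since $|u|\approx1$ on $V_\eps$, this is $\approx|V_\eps|\approx\eps\cdot\eps$. Integrating over the fixed $w_2$-region gives $|E_\eps|\approx\eps^2$, which is \eqref{eq:pinching-volume}.

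\emph{Cone conditions.} For $z,w\in E_\eps$ with parameters $(z_2,v)$ and $(w_2,u)$, the argument estimates are the same as in Lemma~\ref{lem:fixed-test}:
\[
   |\arg(z_2\overline{w_2})|<2\omega,\qquad
   |\arg(z_1\overline{w_1})|\le\tfrac nm 2\omega+\tfrac1m 2\eps.
\]
Both are less than $\theta_0$ once $\omega$ and $\eps_0$ are small. Next, $1-z_2\overline{w_2}$ has real part at least $1-9/16$ and modulus at most $2$. Its argument is therefore bounded by a constant, which I want below $\psi$. To arrange this, I will shrink the argument window further, taking $|\arg z_2|<\omega$ with $\omega$ small relative to $\psi$. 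Then $z_2\overline{w_2}$ lies near the positive real segment $[1/4,9/16]$, and $1-z_2\overline{w_2}$ lies in a small neighborhood of $[7/16,3/4]$, hence in $\Gamma_\psi$. Moreover $r=1/4\le|z_2\overline{w_2}|\le 9/16=R$.

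\emph{The pinched factor.} This step is the heart of the lemma. We have $(z_1\overline{w_1})^m/(z_2\overline{w_2})^n=v\overline u$, where $v\overline u=\rho e^{i\phi}$ with $\rho\in((1-\eps)^2,1)$ and $|\phi|<2\eps$. Writing $1-\rho e^{i\phi}=(1-\rho)+\rho(1-e^{i\phi})$ shows that the real part is $1-\rho\cos\phi\ge 1-\rho$ and the imaginary part has modulus at most $|\phi|<2\eps$. This alone does not control the argument, since $1-\rho$ may be much smaller than $\eps$. I therefore restrict the radial range so that it sits away from $1$: take $1-2\eps<|u|<1-\eps$. Then $1-\rho\ge\eps$, so $1-\rho\approx\eps$. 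I also shrink the angular window to $|\arg u|<\eta\eps$, with $\eta$ small depending on $\psi$. Then $|\Im|\le 2\eta\eps\le\tan\psi\cdot\Re$, which gives $1-v\overline u\in\Gamma_\psi$ and $|1-v\overline u|\approx\eps$. Finally, $R(z)=1-|v|^2\approx 1-|v|\approx\eps$, and likewise $R(w)\approx\eps$. The volume computation is unchanged, since the $u$-box still has area $\approx\eta\eps^2\approx\eps^2$. This completes \eqref{eq:pinching-cones} and \eqref{eq:pinching-moduli}, with all constants depending only on $m,n,\theta_0,\psi$. The only real obstacle is this last point: making the $u$-box both thin in angle and bounded away from the unit circle by $\approx\eps$, so that $1-v\overline u$ is comparable to $\eps$ and stays in the cone.
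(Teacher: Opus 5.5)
Your final construction is correct and is essentially the paper's. You take a fixed thin polar sector of an annulus in $z_2$, chosen so that $z_2\overline{w_2}\in\Gamma_{\theta_0}$ and $1-z_2\overline{w_2}\in\Gamma_\psi$, times the box $\{1-2\eps<|u|<1-\eps,\ |\arg u|<\eta\eps\}$ in $u=\Phi$, and you compute the volume from the one-sheet Jacobian formula~\eqref{eq:one-sheet-power-substitution}. Your $\eta$ (with $2\eta<\tan\psi$) plays exactly the role of the paper's $c_\psi=\min\{1,\tfrac14\tan\psi\}$, and you correctly saw that the initial box $1-\eps<|u|<1$ had to be moved inward so that $\operatorname{Re}(1-v\overline u)\ge\eps$.
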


\begin{proof}
Fix numbers $0<s_-<s_+<1$ and put $r=s_-^2$, $R=s_+^2$.  Choose
$\omega_2>0$ so small that
\[
   2\omega_2<\theta_0,
   \qquad
   \frac{2n}{m}\omega_2<\frac{\theta_0}{2},
   \qquad
   \frac{2\omega_2}{1-R}<\tan\psi.
\]
Let
\[
   U_2=\{\lambda=se^{i\vartheta}:
   s_-<s<s_+,\ |\vartheta|<\omega_2\}.
\]
Then $U_2\Subset\Dstar$.  Since $U_2$ is contained in a simply connected
subdomain of $\Dstar$, we fix on $U_2$ a single-valued branch
\[
   \tau(\lambda)=\lambda^{n/m}.
\]
For $z_2,w_2\in U_2$,
\[
   r\le|z_2\overline{w_2}|\le R,
   \qquad
   z_2\overline{w_2}\in\Gamma_{\theta_0},
   \qquad
   \tau(z_2)\overline{\tau(w_2)}
   \in\Gamma_{\theta_0/2}.
\]
Furthermore,
\[
   \operatorname{Re}(1-z_2\overline{w_2})\ge1-R,
   \qquad
   |\operatorname{Im}(1-z_2\overline{w_2})|<2\omega_2,
\]
and therefore
\[
   \frac{|\operatorname{Im}(1-z_2\overline{w_2})|}
        {\operatorname{Re}(1-z_2\overline{w_2})}
   <\tan\psi.
\]
Hence $1-z_2\overline{w_2}\in\Gamma_\psi$.

Set
\[
   c_\psi=\min\left\{1,\frac14\tan\psi\right\}.
\]
Choose $\eps_0>0$ so small that
\[
   \eps_0<\min\left\{\frac14,
   \frac{m\theta_0}{4c_\psi},\frac{\pi}{4c_\psi}\right\}.
\]
For $0<\eps<\eps_0$, define
\[
   Q_\eps=\{(1-s)e^{i\vartheta}:
   \eps<s<2\eps,\ |\vartheta|<c_\psi\eps\}.
\]
The principal branch of $u^{1/m}$ is single-valued on $Q_\eps$.  Define
\[
   E_\eps=
   \{(\tau(z_2)u^{1/m},z_2):z_2\in U_2,\ u\in Q_\eps\}.
\]
Then $z_1^m/z_2^n=u$, so $E_\eps\subset H_{m/n}$.

Let
\[
   z=(\tau(z_2)u^{1/m},z_2),
   \qquad
   w=(\tau(w_2)v^{1/m},w_2)
\]
be in $E_\eps$.  Since
\[
   |\arg(u^{1/m}\overline{v^{1/m}})|
   \le\frac{2c_\psi\eps}{m}<\frac{\theta_0}{2},
\]
the product representation
\[
   z_1\overline{w_1}
   =\tau(z_2)\overline{\tau(w_2)}
      u^{1/m}\overline{v^{1/m}}
\]
gives $z_1\overline{w_1}\in\Gamma_{\theta_0}$.  This proves the first two
cone inclusions in \eqref{eq:pinching-cones}.

Write
\[
   u=(1-s)e^{i\vartheta},
   \qquad
   v=(1-t)e^{i\varphi},
\]
where $\eps<s,t<2\eps$ and
$|\vartheta|,|\varphi|<c_\psi\eps$.  Then
\begin{align*}
   \operatorname{Re}(1-u\overline v)
   &=1-(1-s)(1-t)\cos(\vartheta-\varphi)\\
   &\ge s+t-st\ge\eps,
\end{align*}
and
\[
   |\operatorname{Im}(1-u\overline v)|
   \le|\vartheta-\varphi|<2c_\psi\eps.
\]
Because $2c_\psi<\tan\psi$, it follows that
$1-u\overline v\in\Gamma_\psi$.  Since
\[
   u\overline v
   =\frac{(z_1\overline{w_1})^m}
          {(z_2\overline{w_2})^n},
\]
all inclusions in \eqref{eq:pinching-cones} are proved.

The formula \eqref{eq:one-sheet-power-substitution} gives the exact
volume identity
\[
   |E_\eps|
   =\frac1{m^2}\int_{U_2}|z_2|^{2n/m}\dA(z_2)
      \int_{Q_\eps}|u|^{2/m-2}\dA(u).
\]
The factors involving $z_2$ and $|u|$ are bounded above and below uniformly,
and
\[
   |Q_\eps|
   =2c_\psi\eps\int_\eps^{2\eps}(1-s)\,ds
   \approx\eps^2.
\]
This proves \eqref{eq:pinching-volume}.

Finally,
\[
   1-|u|^2=2s-s^2\approx\eps,
   \qquad
   1-|v|^2=2t-t^2\approx\eps.
\]
Also,
\begin{align*}
   |1-u\overline v|
   &\le |1-(1-s)(1-t)|
      +(1-s)(1-t)|1-e^{i(\vartheta-\varphi)}|\\
   &\lesssim s+t+|\vartheta-\varphi|
   \lesssim\eps,
\end{align*}
whereas the real-part estimate above gives
$|1-u\overline v|\ge\eps$.  Hence
$|1-u\overline v|\approx\eps$, and
\eqref{eq:pinching-moduli} follows.
\end{proof}

\begin{proposition}\label{prop:positive-necessity}
If $S^+_{a,b,c}$ is bounded on $L^p(H_{m/n})$, then
$\mathcal C(a,b,c;p)$ holds.
\end{proposition}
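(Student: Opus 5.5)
We prove the three conditions separately, using the test-function lemmas just established, and use duality for the right-hand halves. If $S^+_{a,b,c}$ is bounded on $L^p$, then its adjoint is bounded on $L^q$ with $q=p/(p-1)$. This adjoint is $S^+_{b,a,c}$, because $|B_{m/n}(z,w)|=|B_{m/n}(w,z)|$. So it suffices to derive the left-hand inequalities of \eqref{cond:boundary} and \eqref{cond:vertex} for general $(a,b,p)$. Applying those to $(b,a,q)$ gives the right-hand ones: $-bq<1$ is equivalent to $1<(b+1)p$, and $(c-2b)(m+n-1)q<4(m+n)$ is the right inequality of \eqref{cond:vertex}. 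Condition \eqref{cond:pinch} is symmetric and needs a separate argument.

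\emph{Conditions \eqref{cond:boundary} and \eqref{cond:vertex}.} Fix $\theta_0=\theta_*$ and parameters in Lemma~\ref{lem:fixed-test}, and take the set $E$ given there. Since $\chi_E\in L^p$, boundedness forces $S^+_{a,b,c}\chi_E\in L^p$. By \eqref{eq:fixed-test-lower}, this means that $B_{m/n}(z,z)^{-ap/2}|z_2|^{-pc\kappa/2}$ is integrable over the region $W$ of Lemma~\ref{lem:fixed-test}, defined by $r_*<|v|<1$, $\arg z_2\in J_2$, $\arg v\in J_\Phi$. On $W$ we have $|\Phi(z)|>r_*$, so Lemma~\ref{lem:critical-diagonal} gives $B(z,z)\approx |z_2|^{-2\kappa}R^{-2}\rho^{-2}$. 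The integrand is therefore comparable to $|z_2|^{\kappa p(a-c/2)}R(z)^{ap}\rho(z)^{ap}$. Next we change variables via Lemma~\ref{lem:power-substitution}(ii), with $z_1=z_2^{n/m}v^{1/m}$, which costs $|z_2|^{2n/m}|v|^{2/m-2}\approx|z_2|^{2n/m}$. The integral then factors into three pieces. The piece $\int(1-|v|^2)^{ap}$ near $|v|=1$ is finite iff $ap>-1$. The piece $\int(1-|z_2|^2)^{ap}$ near $|z_2|=1$ gives the same condition. The piece $\int_0 |z_2|^{\kappa p(a-c/2)+2n/m}\,|z_2|\,d|z_2|$ is finite iff $\kappa p(c-2a)/2<2n/m+2=2(m+n)/m$, which is exactly $(c-2a)(m+n-1)p<4(m+n)$. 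This yields the left halves of \eqref{cond:boundary} and \eqref{cond:vertex}, and duality gives the rest.

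\emph{Condition \eqref{cond:pinch}.} Use Lemma~\ref{lem:pinching-boxes} with $\psi$ small and test against $f=\chi_{E_\eps}$. Boundedness implies
\[
   \langle S^+_{a,b,c}\chi_{E_\eps},\chi_{E_\eps}\rangle\lesssim|E_\eps|^{1/p+1/q}=|E_\eps|\approx\eps^2 .
\]
For $z,w\in E_\eps$ we bound the kernel from below. Lemma~\ref{lem:conical-lower}(ii) together with \eqref{eq:pinching-moduli} gives $|B(z,w)|\gtrsim\eps^{-2}$. On $E_\eps$ we have $|\Phi|\approx1$ and $\rho\approx1$, so Lemma~\ref{lem:critical-diagonal} gives $B(z,z)\approx\eps^{-2}$, and likewise for $w$. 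Hence the kernel is $\gtrsim\eps^{-c}\eps^{a}\eps^{b}$ on $E_\eps\times E_\eps$. The left side is therefore $\gtrsim\eps^{a+b-c}|E_\eps|^2\approx\eps^{a+b-c+4}$. Comparing with $\eps^2$ and letting $\eps\to0$ forces $a+b+2-c\ge0$, which is \eqref{cond:pinch}.

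The main point of care is the vertex computation. We must track the Jacobian factor $|z_2|^{2n/m}$ correctly and check that the region $W$ near $z_2=0$ still has positive angular measure, so that the radial integral at $0$ is genuinely divergent when the condition fails. The lower bound \eqref{eq:fixed-test-lower} is uniform in $z\in W$, and the restriction $r_*<|v|<1$ is exactly what makes Lemma~\ref{lem:critical-diagonal} applicable. For the case $a\ge0$ we need \emph{both} sides of Lemma~\ref{lem:critical-diagonal} in order to turn $B(z,z)^{-ap/2}$ into explicit powers.
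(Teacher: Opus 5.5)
Your proof is correct and is essentially the paper's argument: you use the same test set $E$ from Lemma~\ref{lem:fixed-test} together with Lemma~\ref{lem:critical-diagonal}, duality through $S^+_{b,a,c}$, and the pinching sets $E_\eps$ of Lemma~\ref{lem:pinching-boxes}. The differences are cosmetic---you integrate over the whole region and factor the integral instead of localizing separately near $|\Phi|=1$ and near $z_2=0$, and you pair $S^+_{a,b,c}\chi_{E_\eps}$ with $\chi_{E_\eps}$ instead of comparing $L^p$ norms---but your closing remark has the cases reversed: the nontrivial lower half of Lemma~\ref{lem:critical-diagonal}, which is where $|\Phi|>r_*$ enters, is needed when $a<0$, whereas for $a\ge0$ the global upper bound already suffices.
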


\begin{proof}
We first derive the two left-hand inequalities in
\eqref{cond:boundary} and \eqref{cond:vertex}.  Choose fixed numbers
\[
   0<r_*<v_-<v_+<1,
   \qquad
   0<u_-<u_+<1,
   \qquad
   0<t_-<t_+<1,
\]
and choose $0<\theta_0\le\theta_*$.  Apply
Lemma~\ref{lem:fixed-test} with the tuple
\[
   (a,b,c;\theta_0,r_*,u_-,u_+,t_-,t_+),
\]
and denote the resulting objects by $E,J_2,J_\Phi$.  All fractional
powers below use the branches chosen in Lemma~\ref{lem:fixed-test}.

\smallskip
We next prove the inequality $-ap<1$.  Choose
$0<s_-<s_+<1$.  For $0<\eps<1-r_*$, let
\[
\begin{aligned}
   \mathcal U_\eps=\{&(z_2^{n/m}v^{1/m},z_2):
   s_-<|z_2|<s_+,\ \arg z_2\in J_2,\\
   &\hspace{36mm}
   1-\eps<|v|<1,\ \arg v\in J_\Phi\}.
\end{aligned}
\]
On $\mathcal U_\eps$, the $z_2$-variable stays in a compact annulus, while
$|v|>r_*$.  Lemma~\ref{lem:critical-diagonal} therefore gives
\[
   B_{m/n}(z,z)\approx
   \frac{|z_2|^{-2\kappa}}{R(z)^2\rho(z)^2}.
\]
Since $|z_2|$ and $\rho(z)$ are bounded above and below on $\mathcal U_\eps$,
it follows that
\[
   B_{m/n}(z,z)^{-a/2}\approx R(z)^a,
   \qquad R(z)=1-|v|^2,
   \qquad v=\Phi(z)=\frac{z_1^m}{z_2^n}.
\]
Together with Lemma~\ref{lem:fixed-test}, this yields
\begin{equation}\label{eq:positive-boundary-test}
   S^+_{a,b,c}\chi_E(z)\gtrsim R(z)^a,
   \qquad z\in\mathcal U_\eps.
\end{equation}
Since $\chi_E\in L^p$ and the operator is bounded, the right-hand side of
\eqref{eq:positive-boundary-test} must be locally in $L^p$.  By \eqref{eq:one-sheet-power-substitution}, the relevant local integral is,
up to a fixed positive factor,
\[
   \int_{1-\eps}^1(1-r^2)^{ap}r^{2/m-1}\,dr.
\]
Near $r=1$, the factor $r^{2/m-1}$ is bounded above and below.  The integral is
finite exactly when $ap>-1$.  Hence
\[
   -ap<1.
\]

\smallskip
We now prove the inequality
$(c-2a)(m+n-1)p<4(m+n)$. For $0<\eps<1/2$, set
\[
\begin{aligned}
   \mathcal V_\eps=\{&(z_2^{n/m}v^{1/m},z_2):
   0<|z_2|<\eps,\ \arg z_2\in J_2,\\
   &\hspace{36mm}
   v_-<|v|<v_+,\ \arg v\in J_\Phi\}.
\end{aligned}
\]
On $\mathcal V_\eps$, one has $R(z)\approx1$ and $\rho(z)\approx1$.
Lemma~\ref{lem:critical-diagonal} therefore gives
\[
   B_{m/n}(z,z)^{-a/2}\approx |z_2|^{a\kappa}.
\]
Lemma~\ref{lem:fixed-test} therefore implies
\begin{equation}\label{eq:positive-vertex-test}
   S^+_{a,b,c}\chi_E(z)
   \gtrsim |z_2|^{(a-c/2)\kappa},
   \qquad z\in\mathcal V_\eps.
\end{equation}
The ratio variable ranges over a fixed polar box of positive weighted area.
Using \eqref{eq:one-sheet-power-substitution} once more, local $L^p$
integrability of \eqref{eq:positive-vertex-test} is equivalent to
\[
   \int_0^\eps
   r^{(a-c/2)\kappa p+2n/m+1}\,dr<\infty.
\]
Thus
\[
   (a-c/2)\kappa p+\frac{2n}{m}+2>0,
\]
or, since $\kappa=(m+n-1)/m$,
\[
   (c-2a)(m+n-1)p<4(m+n).
\]

The adjoint of $S^+_{a,b,c}$ is $S^+_{b,a,c}$, because
$|B_{m/n}(z,w)|=|B_{m/n}(w,z)|$.  If $q=p/(p-1)$, boundedness on $L^p$
therefore implies boundedness of $S^+_{b,a,c}$ on $L^q$.  Applying the two
inequalities just proved with $(a,p)$ replaced by $(b,q)$ gives
\[
   -bq<1,
   \qquad
   (c-2b)(m+n-1)q<4(m+n).
\]
The first is equivalent to $1<(b+1)p$, and the second is equivalent to
\[
   4(m+n)
   <\bigl((2b-c)(m+n-1)+4(m+n)\bigr)p.
\]
This proves \eqref{cond:boundary} and \eqref{cond:vertex}.

\smallskip
It remains to prove the inequality $c\le a+b+2$.  Apply
Lemma~\ref{lem:pinching-boxes} with any fixed
$0<\psi<\pi/2$.  By \eqref{eq:conical-pinch-lower} and
\eqref{eq:pinching-moduli}, for $z,w\in E_\eps$,
\[
   |B_{m/n}(z,w)|
   \gtrsim
   \left|1-\frac{(z_1\overline{w_1})^m}
   {(z_2\overline{w_2})^n}\right|^{-2}
   \approx \eps^{-2}.
\]
Setting $w=z$ in \eqref{eq:pinching-moduli} gives
\[
   r\le |z_2|^2\le R,
   \qquad z\in E_\eps,
\]
and the same estimate holds with $z$ replaced by $w$.
Consequently,
\[
   |z_2|^{-2\kappa}\approx |w_2|^{-2\kappa}\approx1,
   \qquad
   \rho(z)\approx\rho(w)\approx1.
\]
The same formula gives $R(z)\approx R(w)\approx\eps$. Moreover,
$|\Phi(z)|,|\Phi(w)|\ge1/2$ when $\eps$ is sufficiently small. Therefore
Lemma~\ref{lem:critical-diagonal}, with $\delta=1/2$, gives
\[
   B_{m/n}(z,z)
   \approx \frac{|z_2|^{-2\kappa}}{R(z)^2\rho(z)^2}
   \approx \eps^{-2},
   \qquad
   B_{m/n}(w,w)
   \approx \frac{|w_2|^{-2\kappa}}{R(w)^2\rho(w)^2}
   \approx \eps^{-2}.
\]
Hence
\[
   B_{m/n}(z,z)^{-a/2}\approx\eps^a,
   \qquad
   B_{m/n}(w,w)^{-b/2}\approx\eps^b.
\]
Consequently,
\[
   S^+_{a,b,c}\chi_{E_\eps}(z)
   \gtrsim \eps^{a+b-c}|E_\eps|
   \approx\eps^{a+b+2-c},
   \qquad z\in E_\eps.
\]
Taking $L^p$ norms and cancelling the common factor
$|E_\eps|^{1/p}$ gives
\[
   \eps^{a+b+2-c}\lesssim1
   \qquad(\eps\to0^+).
\]
Therefore $c\le a+b+2$, proving \eqref{cond:pinch}.
\end{proof}

\begin{proof}[\textbf{Proof of Theorem~\ref{thm:positive-main}}]
Proposition~\ref{prop:sufficiency} proves sufficiency, and
Proposition~\ref{prop:positive-necessity} proves necessity.
\end{proof}

\begin{proof}[\textbf{Proof of Corollary~\ref{cor:berezin}}]
The Berezin transform is $S^+_{2,0,4}$.  For $(a,b,c)=(2,0,4)$,
condition \eqref{cond:boundary} is automatic once $p>1$, the left inequality
in \eqref{cond:vertex} is automatic because $c-2a=0$, and
\eqref{cond:pinch} holds with equality.  The remaining inequality is
\[
   4(m+n)<\bigl(-4(m+n-1)+4(m+n)\bigr)p=4p,
\]
which is equivalent to $p>m+n$.  Since $m+n\ge2$, this condition already
implies $p>1$.
\end{proof}

\section{Integer powers}

The sufficiency for $S_{a,b,2N}$ follows immediately from
Theorem~\ref{thm:positive-main}, since
\[
   |S_{a,b,2N}f(z)|\le S^+_{a,b,2N}(|f|)(z).
\]
It remains to prove necessity.

\begin{lemma}\label{lem:fixed-phase}
Let $N\in\Z_+$.  There exists $0<\theta_N\le\theta_*$ such that, whenever
\begin{equation}\label{eq:fixed-phase-region}
   z_1\overline{w_1},\quad z_2\overline{w_2},\quad
   1-z_2\overline{w_2},\quad
   1-\frac{(z_1\overline{w_1})^m}
           {(z_2\overline{w_2})^n}
   \in\Gamma_{\theta_N},
\end{equation}
one has
\begin{equation}\label{eq:fixed-phase-estimate}
   \operatorname{Re}\bigl(B_{m/n}(z,w)^N\bigr)
   \ge 2^{-1/2}|B_{m/n}(z,w)|^N.
\end{equation}
\end{lemma}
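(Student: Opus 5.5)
The plan is to control the argument of $B_{m/n}(z,w)$ on the region \eqref{eq:fixed-phase-region} and then multiply by $N$. We need $|\arg B_{m/n}(z,w)|\le \pi/(4N)$, since then $\arg(B_{m/n}^N)\in[-\pi/4,\pi/4]$. That gives $\operatorname{Re}(B^N)=|B|^N\cos(N\arg B)\ge 2^{-1/2}|B|^N$, which is \eqref{eq:fixed-phase-estimate}. We use the common-denominator form from Lemma~\ref{lem:numerator-cone},
\[
   B_{m/n}(z,w)=\frac{\mathcal N(z,w)}{(1-z_2\overline{w_2})^2\,(z_2\overline{w_2})^{2n}\bigl(1-\frac{(z_1\overline{w_1})^m}{(z_2\overline{w_2})^n}\bigr)^2},
\]
and bound the argument of each factor separately.

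\textbf{Numerator.} Apply Lemma~\ref{lem:numerator-cone} with $\omega=\omega_N:=\pi/(20N)$. Provided $\theta_N\le\theta(\omega_N)$, every nonzero $N_j$ lies in $\Gamma_{\omega_N}$. Since $\Gamma_{\omega_N}$ is a convex cone, the sum $\mathcal N$ also lies in $\Gamma_{\omega_N}$. It is nonzero by the last assertion of that lemma, so $|\arg\mathcal N|<\omega_N$.

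\textbf{Denominator.} The factor $1-z_2\overline{w_2}$ lies in $\Gamma_{\theta_N}$ by hypothesis, so its square has argument less than $2\theta_N$. The same holds for the square of $1-(z_1\overline{w_1})^m/(z_2\overline{w_2})^n$. Since $z_2\overline{w_2}\in\Gamma_{\theta_N}$, the factor $(z_2\overline{w_2})^{2n}$ has argument less than $2n\theta_N$.

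\textbf{Choice of $\theta_N$ and conclusion.} Adding the bounds gives
\[
   |\arg B_{m/n}(z,w)|<\omega_N+(4+2n)\theta_N .
\]
Choose
\[
   \theta_N=\min\Bigl\{\theta_*,\ \theta(\omega_N),\ \frac{\pi}{20N(4+2n)}\Bigr\}.
\]
Then $|\arg B|<\pi/(10N)<\pi/(4N)$, and the estimate follows as above. The arguments add without $2\pi$ ambiguity because every quantity stays within $\pi/4$ of the positive axis, so the sum is far below $\pi$.

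The only delicate point is the numerator. Cancellation among the $N_j$ could in principle rotate $\mathcal N$, but Lemma~\ref{lem:numerator-cone} rules this out once $\omega$ is chosen depending on $N$. After that, the rest is bookkeeping of arguments.
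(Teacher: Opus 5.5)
Your proposal is correct and is essentially the paper's proof. You apply Lemma~\ref{lem:numerator-cone} with an $N$-dependent $\omega_N$ to get $|\arg\mathcal N|<\omega_N$, bound the denominator factors' arguments by $2(n+2)\theta_N$ in total, and choose $\theta_N$ so the sum stays below $\pi/(4N)$. The only differences are cosmetic: you take $\omega_N=\pi/(20N)$ instead of the paper's $\pi/(8N)$, and you cite convexity of $\Gamma_{\omega_N}$ explicitly to pass from the $N_j$ to $\mathcal N$, a step the paper leaves implicit.
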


\begin{proof}
Set $\omega_N=\pi/(8N)$.  Choose $\theta_N>0$ so small that
\[
   \theta_N\le\min\{\theta_*,\theta(\omega_N)\}
   \qquad\text{and}\qquad
   \omega_N+2(n+2)\theta_N<\frac{\pi}{4N}.
\]
Under \eqref{eq:fixed-phase-region},
Lemma~\ref{lem:numerator-cone} places the common numerator
$\mathcal N(z,w)$ in $\Gamma_{\omega_N}$.  It is nonzero on this region, so
\[
   |\arg\mathcal N(z,w)|<\omega_N.
\]
The first denominator factor satisfies
\[
   \left|\arg\bigl((1-z_2\overline{w_2})^2\bigr)\right|<2\theta_N.
\]
For the second factor, write
\[
   (z_2\overline{w_2})^n-(z_1\overline{w_1})^m
   =(z_2\overline{w_2})^n
   \left(1-\frac{(z_1\overline{w_1})^m}
   {(z_2\overline{w_2})^n}\right).
\]
Its square has argument of absolute value less than
$2(n+1)\theta_N$.  Consequently,
\[
   |\arg B_{m/n}(z,w)|
   <\omega_N+2(n+2)\theta_N<\frac{\pi}{4N}.
\]
Thus
$\left|\arg\bigl(B_{m/n}(z,w)^N\bigr)\right|<\pi/4$, and
\eqref{eq:fixed-phase-estimate} follows from
$\cos(\pi/4)=2^{-1/2}$.
\end{proof}

\begin{lemma}\label{lem:one-minus-cone}
Let $0<\Theta<\pi/2$ and $0<R<1$.  If
$\lambda=re^{i\varphi}$, $r\le R$, and
\[
   |\varphi|<(1-R)\tan\Theta,
\]
then $1-\lambda\in\Gamma_\Theta$.
\end{lemma}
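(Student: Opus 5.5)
The plan is to estimate the real and imaginary parts of $1-\lambda$ directly and compare their ratio with $\tan\Theta$. Write $\lambda=re^{i\varphi}$ with $0\le r\le R<1$. If $\lambda=0$, then $1-\lambda=1\in\Gamma_\Theta$ and there is nothing to prove, so assume $r>0$. Then
\[
   1-\lambda=(1-r\cos\varphi)-i\,r\sin\varphi .
\]

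First we bound the real part from below. Since $\cos\varphi\le1$ and $r\le R$, we have
\[
   \operatorname{Re}(1-\lambda)=1-r\cos\varphi\ge 1-r\ge 1-R>0.
\]
In particular $1-\lambda$ lies in the open right half-plane. Its principal argument therefore lies in $(-\pi/2,\pi/2)$ and satisfies $\tan\bigl(\operatorname{Arg}(1-\lambda)\bigr)=\operatorname{Im}(1-\lambda)/\operatorname{Re}(1-\lambda)$.

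Next we bound the imaginary part from above. Using $|\sin\varphi|\le|\varphi|$ and $r\le 1$,
\[
   |\operatorname{Im}(1-\lambda)|=r|\sin\varphi|\le|\varphi|<(1-R)\tan\Theta .
\]
Dividing by the real-part bound gives
\[
   |\tan\operatorname{Arg}(1-\lambda)|
   =\frac{|\operatorname{Im}(1-\lambda)|}{\operatorname{Re}(1-\lambda)}
   <\frac{(1-R)\tan\Theta}{1-R}=\tan\Theta .
\]

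Finally, $\tan$ is increasing on $(-\pi/2,\pi/2)$, and both $\operatorname{Arg}(1-\lambda)$ and $\Theta$ lie in that interval. Hence $|\operatorname{Arg}(1-\lambda)|<\Theta$, which means $1-\lambda\in\Gamma_\Theta$. The computation is elementary; the only point needing care is to secure a positive real part before passing from the tangent inequality to the argument inequality, and $r\le R<1$ gives that positivity.
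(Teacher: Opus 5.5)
Your proof is correct and follows the paper's argument: the lower bound $\operatorname{Re}(1-\lambda)\ge 1-R$ and the upper bound $|\operatorname{Im}(1-\lambda)|\le|\varphi|$ give a ratio below $\tan\Theta$. Your remark that the positive real part is what lets you pass from the tangent inequality to $|\operatorname{Arg}(1-\lambda)|<\Theta$ spells out a step the paper leaves implicit.
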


\begin{proof}
We have
\[
   \operatorname{Re}(1-\lambda)\ge1-R,
   \qquad
   |\operatorname{Im}(1-\lambda)|\le|\varphi|.
\]
Hence
\[
   \frac{|\operatorname{Im}(1-\lambda)|}
        {\operatorname{Re}(1-\lambda)}
   <\tan\Theta,
\]
and therefore $1-\lambda\in\Gamma_\Theta$.
\end{proof}

\begin{proposition}\label{prop:integer-necessity}
If $S_{a,b,2N}$ is bounded on $L^p(H_{m/n})$, then
$\mathcal C(a,b,2N;p)$ holds.
\end{proposition}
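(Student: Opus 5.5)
The plan is to repeat the three test-function arguments of Proposition~\ref{prop:positive-necessity}, but to take real parts so that the signed kernel $B_{m/n}(z,w)^N$ behaves like the positive kernel $|B_{m/n}(z,w)|^N$ on the relevant regions. The tool is Lemma~\ref{lem:fixed-phase}. On the region \eqref{eq:fixed-phase-region} we have $\operatorname{Re}(B_{m/n}^N)\ge 2^{-1/2}|B_{m/n}|^N$. Hence, for a nonnegative test function $\chi_E$ and for $z$ such that every $w\in E$ lies in that region together with $z$,
\[
   |S_{a,b,2N}\chi_E(z)|\ge \operatorname{Re} S_{a,b,2N}\chi_E(z)\ge 2^{-1/2}S^+_{a,b,2N}\chi_E(z).
\]
Once this holds, the lower bounds from Section~4 carry over verbatim, and the integrability computations give the same inequalities.

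\textbf{Step 1: the left inequalities of \eqref{cond:boundary} and \eqref{cond:vertex}.}
\begin{enumerate}[label=\textup{(\alph*)}]
\item Apply Lemma~\ref{lem:fixed-test} with $\theta_0=\theta_N$ and $c=2N$. This gives $z_1\overline{w_1},z_2\overline{w_2}\in\Gamma_{\theta_N}$ on the test regions $\mathcal U_\eps$ and $\mathcal V_\eps$.
\item Check the two remaining conditions, $1-z_2\overline{w_2}\in\Gamma_{\theta_N}$ and $1-v\overline u\in\Gamma_{\theta_N}$. Note that $v\overline u=(z_1\overline{w_1})^m/(z_2\overline{w_2})^n$.
\item For this I would use Lemma~\ref{lem:one-minus-cone}. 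We have $|z_2\overline{w_2}|\le t_+<1$ and $|v\overline u|\le u_+<1$, both bounded away from $1$ by fixed constants. So it suffices that the arguments of $z_2\overline{w_2}$ and of $v\overline u$ are below $(1-\max\{t_+,u_+\})\tan\theta_N$.
\item To get this, shrink the argument windows $J_2$ and $J_\Phi$ in the construction of Lemma~\ref{lem:fixed-test}. Its proof lets $\omega$ be chosen arbitrarily small, so I would restate it with the extra smallness. This shrinking is harmless, since those windows only affect fixed positive constants.
\item Conclude $|S_{a,b,2N}\chi_E(z)|\gtrsim B(z,z)^{-a/2}|z_2|^{-N\kappa}$ on $\mathcal U_\eps$ and $\mathcal V_\eps$. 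The same integrability arguments then give $-ap<1$ and $(2N-2a)(m+n-1)p<4(m+n)$.
\end{enumerate}

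\textbf{Step 2: the right inequalities, by duality.}
\begin{enumerate}[label=\textup{(\alph*)}]
\item The adjoint of $S_{a,b,2N}$ has kernel $\overline{B(w,z)^N}\,B(z,z)^{-b/2}B(w,w)^{-a/2}$. Since $B(w,z)=\overline{B(z,w)}$, this kernel is $B(z,w)^N/(B(z,z)^{b/2}B(w,w)^{a/2})$, so the adjoint is exactly $S_{b,a,2N}$.
\item Boundedness of $S_{a,b,2N}$ on $L^p$ therefore gives boundedness of $S_{b,a,2N}$ on $L^q$.
\item Step 1 applied to $S_{b,a,2N}$ on $L^q$ yields the right halves of \eqref{cond:boundary} and \eqref{cond:vertex}, exactly as in Proposition~\ref{prop:positive-necessity}.
\end{enumerate}

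\textbf{Step 3: condition \eqref{cond:pinch}.}
\begin{enumerate}[label=\textup{(\alph*)}]
\item Use Lemma~\ref{lem:pinching-boxes} with $\theta_0=\theta_N$ and $\psi=\theta_N$. Then \eqref{eq:pinching-cones} is precisely \eqref{eq:fixed-phase-region} for all $z,w\in E_\eps$.
\item This gives $|S_{a,b,2N}\chi_{E_\eps}(z)|\ge 2^{-1/2}S^+_{a,b,2N}\chi_{E_\eps}(z)\gtrsim\eps^{a+b+2-2N}$ on $E_\eps$.
\item Comparing $L^p$ norms gives $2N\le a+b+2$.
\end{enumerate}

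\textbf{Main obstacle.} The hard part is Step 1: verifying that the fixed test set $E$ and the regions near the boundary and the vertex can be arranged so that all four cone conditions of \eqref{eq:fixed-phase-region} hold simultaneously. In particular $1-v\overline u$ must stay in $\Gamma_{\theta_N}$ even as $|v|\to1$ on $\mathcal U_\eps$. This works because $|u|\le u_+<1$ keeps $|v\overline u|\le u_+$, so Lemma~\ref{lem:one-minus-cone} applies uniformly. I expect this to reduce to a careful choice of the window width $\omega$ in terms of $\theta_N$, $t_+$ and $u_+$.
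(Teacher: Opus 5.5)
Your proposal is correct and follows the paper's proof essentially step for step. It uses the same real-part comparison via Lemma~\ref{lem:fixed-phase}, Lemma~\ref{lem:one-minus-cone} to place $1-z_2\overline{w_2}$ and $1-v\overline u$ in $\Gamma_{\theta_N}$, Lemma~\ref{lem:pinching-boxes} with $\psi=\theta_N$, and the identification of the adjoint as $S_{b,a,2N}$.

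The one simplification the paper makes is that you need not reopen the proof of Lemma~\ref{lem:fixed-test}. Its conclusion already gives $z_2\overline{w_2},v\overline u\in\Gamma_{\theta_0}$, so it suffices to apply it with some $\theta_0\le\min\{\theta_*,\theta_N\}$ satisfying $\theta_0<(1-R)\tan\theta_N$, where $R=\max\{s_+t_+,u_+\}$. Your initial choice $\theta_0=\theta_N$ is too large for this, which is why you needed the extra shrinking step.
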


\begin{proof}
We shall use Lemma~\ref{lem:one-minus-cone} with
$\Theta=\theta_N$, where $\theta_N$ is supplied by
Lemma~\ref{lem:fixed-phase}.

Choose fixed numbers
\[
   0<r_*<v_-<v_+<1,
   \qquad
   0<u_-<u_+<1,
   \qquad
   0<t_-<t_+<1,
\]
and choose fixed $0<s_-<s_+<1$.  Put
\[
   R_2=s_+t_+<1,
   \qquad R_\Phi=u_+<1.
\]
Take $0<\theta_0\le\min\{\theta_*,\theta_N\}$ sufficiently small that
\[
   \theta_0<(1-R_2)\tan\theta_N,
   \qquad
   \theta_0<(1-R_\Phi)\tan\theta_N.
\]
Apply Lemma~\ref{lem:fixed-test} with
\[
   (a,b,2N;\theta_0,r_*,u_-,u_+,t_-,t_+).
\]
We obtain
$E\Subset H_{m/n}$ and intervals $J_2,J_\Phi$ such that, for every
$w=(w_2^{n/m}u^{1/m},w_2)\in E$ and every
\[
   z=(z_2^{n/m}v^{1/m},z_2)
\]
with $\arg z_2\in J_2$, $\arg v\in J_\Phi$, and $r_*<|v|<1$,
Lemma~\ref{lem:fixed-test} gives
\[
   z_1\overline{w_1},\quad z_2\overline{w_2},\quad
   v\overline u\in\Gamma_{\theta_0}\subset\Gamma_{\theta_N}.
\]

Consider first the sets $\mathcal U_\eps$ from the proof of
Proposition~\ref{prop:positive-necessity}, using the fixed annulus
$s_-<|z_2|<s_+$.  For $z\in\mathcal U_\eps$ and $w\in E$,
\[
   |z_2\overline{w_2}|\le R_2,
   \qquad
   |v\overline u|\le R_\Phi.
\]
Lemma~\ref{lem:one-minus-cone} gives
\[
   1-z_2\overline{w_2}\in\Gamma_{\theta_N},
   \qquad
   1-v\overline u\in\Gamma_{\theta_N}.
\]
Since
\[
   v\overline u
   =\frac{(z_1\overline{w_1})^m}
          {(z_2\overline{w_2})^n},
\]
all four conditions in \eqref{eq:fixed-phase-region} hold.  The diagonal
weights are positive real numbers, so Lemma~\ref{lem:fixed-phase} yields
\begin{equation}\label{eq:integer-positive-comparison-fixed}
\begin{aligned}
   |S_{a,b,2N}\chi_E(z)|
   &\ge \operatorname{Re}S_{a,b,2N}\chi_E(z)\\
   &\ge 2^{-1/2}S^+_{a,b,2N}\chi_E(z).
\end{aligned}
\end{equation}
For all sufficiently small $\eps$, the same argument applies on the sets
$\mathcal V_\eps$, because their second coordinates are smaller and their ratio
variables remain in $v_-<|v|<v_+$.  Therefore the two calculations
leading from
\eqref{eq:positive-boundary-test} and \eqref{eq:positive-vertex-test} to the
left-hand inequalities in \eqref{cond:boundary} and \eqref{cond:vertex} apply
verbatim with $c=2N$.  We obtain
\[
   -ap<1,
   \qquad
   (2N-2a)(m+n-1)p<4(m+n).
\]

To prove the inequality $2N\le a+b+2$, apply Lemma~\ref{lem:pinching-boxes} with
\[
   0<\theta_0\le\min\{\theta_*,\theta_N\},
   \qquad \psi=\theta_N.
\]
By \eqref{eq:pinching-cones}, all four quantities in
\eqref{eq:fixed-phase-region} belong to $\Gamma_{\theta_N}$ for every
$z,w\in E_\eps$.  Hence Lemma~\ref{lem:fixed-phase}, together with the kernel, diagonal, and
volume estimates established in the proof of
Proposition~\ref{prop:positive-necessity}, gives
\[
   |S_{a,b,2N}\chi_{E_\eps}(z)|
   \ge \operatorname{Re}S_{a,b,2N}\chi_{E_\eps}(z)
   \gtrsim \eps^{a+b-2N}|E_\eps|
   \approx\eps^{a+b+2-2N}
\]
for $z\in E_\eps$.  Boundedness and the norm comparison with
$\chi_{E_\eps}$ imply
\[
   2N\le a+b+2.
\]

Finally, the adjoint of $S_{a,b,2N}$ is $S_{b,a,2N}$, by the Hermitian
symmetry of the Bergman kernel.  Applying the two left-hand inequalities to the
adjoint on $L^q$, $q=p/(p-1)$, gives
\[
   1<(b+1)p
\]
and
\[
   4(m+n)
   <\bigl((2b-2N)(m+n-1)+4(m+n)\bigr)p.
\]
Together with the preceding inequalities, these are precisely
$\mathcal C(a,b,2N;p)$.
\end{proof}

\begin{proof}[\textbf{Proof of Theorem~\ref{thm:integer-main}}]
The equivalence of \textup{(i)} and \textup{(iii)} is
Theorem~\ref{thm:positive-main} with $c=2N$.  The implication
\textup{(i)}$\Rightarrow$\textup{(ii)} follows from the pointwise domination
\[
   |S_{a,b,2N}f|\le S^+_{a,b,2N}(|f|).
\]
The implication \textup{(ii)}$\Rightarrow$\textup{(iii)} is
Proposition~\ref{prop:integer-necessity}.
\end{proof}

\begin{proof}[\textbf{Proof of Corollary~\ref{cor:bergman-projection}}]
The Bergman projection is $S_{0,0,2}$, so we apply
Theorem~\ref{thm:integer-main} with $(a,b,N)=(0,0,1)$.  Condition
\eqref{cond:boundary} is automatic for $1<p<\infty$, and
\eqref{cond:pinch} holds with equality.  The two inequalities in
\eqref{cond:vertex} become
\[
   2(m+n-1)p<4(m+n)
\]
and
\[
   4(m+n)<\bigl(4(m+n)-2(m+n-1)\bigr)p
            =2(m+n+1)p.
\]
They are equivalent, respectively, to
\[
   p<\frac{2m+2n}{m+n-1}
   \qquad\text{and}\qquad
   p>\frac{2m+2n}{m+n+1}.
\]
\end{proof}

\end{document}